\theoremstyle{plain}
  \declaretheorem[numberwithin=section]{theorem}
  \declaretheorem[numberlike=theorem]{corollary}
  \declaretheorem[numberlike=theorem]{lemma}
\theoremstyle{definition}
  \declaretheorem[numberlike=theorem]{definition}
  \declaretheorem[numberlike=theorem]{example}
  \declaretheorem[numberlike=theorem]{remark}
\newenvironment{acknowledgements}{\bigskip\textbf{Acknowledgements.}}{}
\newcommand{\assign}{:=}
\begin{document}

\title{Gaussian binomial coefficients with negative arguments}

\author{Sam Formichella}
\author{Armin Straub\thanks{\texttt{straub@southalabama.edu}}}
\affil{Department of Mathematics and Statistics\\University of South Alabama}

\date{February 7, 2018}

\maketitle

\begin{abstract}
  Loeb showed that a natural extension of the usual binomial coefficient to
  negative (integer) entries continues to satisfy many of the fundamental
  properties. In particular, he gave a uniform binomial theorem as well as a
  combinatorial interpretation in terms of choosing subsets of sets with a
  negative number of elements. We show that all of this can be extended to the
  case of Gaussian binomial coefficients. Moreover, we demonstrate that
  several of the well-known arithmetic properties of binomial coefficients
  also hold in the case of negative entries. In particular, we show that
  Lucas' Theorem on binomial coefficients modulo $p$ not only extends
  naturally to the case of negative entries, but even to the Gaussian case.
\end{abstract}

\section{Introduction}

Occasionally, the binomial coefficient $\binom{n}{k}$, with integer entries
$n$ and $k$, is considered to be zero when $k < 0$ (see
Remark~\ref{rk:conventions}). However, as observed by Loeb \cite{loeb-neg},
there exists an alternative extension of the binomial coefficients to negative
arguments, which is arguably more natural for many combinatorial or number
theoretic applications. The $q$-binomial coefficients $\binom{n}{k}_q$ (often
also referred to as Gaussian polynomials) are a polynomial generalization of
the binomial coefficients that occur naturally in varied contexts, including
combinatorics, number theory, representation theory and mathematical physics.
For instance, if $q$ is a prime power, then they count the number of
$k$-dimensional subspaces of an $n$-dimensional vector space over the finite
field $\mathbb{F}_q$. We refer to the book \cite{kc-q} for a very nice
introduction to the $q$-calculus. Yet, surprisingly, $q$-binomial coefficients
with general integer entries have, to the best of our knowledge, not been
introduced in the literature (Loeb \cite{loeb-neg} does briefly discuss
$q$-binomial coefficients but only in the case $k \geq 0$). The goal of
this paper is to fill this gap, and to demonstrate that these generalized
$q$-binomial coefficients are natural, by showing that they satisfy many of
the fundamental combinatorial and arithmetic properties of the usual binomial
coefficients. In particular, we extend Loeb's interesting combinatorial
interpretation \cite{loeb-neg} in terms of sets with negative numbers of
elements. On the arithmetic side, we prove that Lucas' theorem can be
uniformly generalized to both binomial coefficients and $q$-binomial
coefficients with negative entries.

In the context of $q$-series, it is common to introduce the $q$-binomial
coefficient, for $n, k \geq 0$, as the quotient
\begin{equation}
  \binom{n}{k}_q = \frac{(q ; q)_n}{(q ; q)_k (q ; q)_{n - k}},
  \label{eq:qbin:def:poch}
\end{equation}
where $(a ; q)_n$ denotes the $q$-Pochhammer symbol
\begin{equation*}
  (a ; q)_n \assign \prod_{j = 0}^{n - 1} (1 - a q^j), \quad n \geq 0.
\end{equation*}
In particular, $(a ; q)_0 = 1$. It is not difficult to see that
\eqref{eq:qbin:def:poch} reduces to the usual binomial coefficient in the
limit $q \rightarrow 1$. In order to extend \eqref{eq:qbin:def:poch} to the
case of negative integers $n$ and $k$, we observe that the simple relation
\begin{equation*}
  (a ; q)_n = \frac{(a ; q)_{\infty}}{(a q^n ; q)_{\infty}}
\end{equation*}
can be used to extend the $q$-Pochhammer symbol to the case when $n < 0$. That
is, if $n < 0$, it is common to define
\begin{equation*}
  (a ; q)_n \assign \prod_{j = 1}^{| n |} \frac{1}{1 - a q^{- j}} .
\end{equation*}
Note that $(q ; q)_n = \infty$ whenever $n < 0$, so that
\eqref{eq:qbin:def:poch} does not immediately extend to the case when $n$ or
$k$ are negative. We therefore make the following definition, which clearly
reduces to \eqref{eq:qbin:def:poch} when $n, k \geq 0$.

\begin{definition}
  For all integers $n$ and $k$,
  \begin{equation}
    \binom{n}{k}_q \assign \lim_{a \rightarrow q} \frac{(a ; q)_n}{(a ; q)_k
    (a ; q)_{n - k}} . \label{eq:qbin:def:pochlim}
  \end{equation}
\end{definition}

Though not immediately obvious from \eqref{eq:qbin:def:pochlim} when $n$ or
$k$ are negative, these generalized $q$-binomial coefficients are Laurent
polynomials in $q$ with integer coefficients. In particular, upon setting $q =
1$, we always obtain integers.

\begin{example}
  \label{eg:qbin:neg}
  \begin{equation*}
    \binom{- 3}{- 5}_q = \lim_{a \rightarrow q} \frac{(a ; q)_{- 3}}{(a ;
     q)_{- 5} (a ; q)_2} = \lim_{a \rightarrow q} \frac{\left(1 -
     \frac{a}{q^4} \right) \left(1 - \frac{a}{q^5} \right)}{(1 - a) (1 - a
     q)} = \frac{(1 + q^2) (1 + q + q^2)}{q^7}
  \end{equation*}
\end{example}

In Section~\ref{sec:pascal}, we observe that, for integers $n$ and $k$, the
$q$-binomial coefficients are also characterized by the Pascal relation
\begin{equation}
  \binom{n}{k}_q = \binom{n - 1}{k - 1}_q + q^k \binom{n - 1}{k}_q,
  \label{eq:qbin:pascal:intro}
\end{equation}
provided that $(n, k) \neq (0, 0)$ (this exceptional case excludes itself
naturally in the proof of Lemma~\ref{lem:qbin:pascal}), together with the
initial conditions
\begin{equation*}
  \binom{n}{0}_q = \binom{n}{n}_q = 1.
\end{equation*}
In the case $q = 1$, this extension of Pascal's rule to negative parameters
was observed by Loeb \cite[Proposition~4.4]{loeb-neg}.

Among the other basic properties of the generalized $q$-binomial coefficient
are the following. All of these are well-known in the classical case $n, k
\geq 0$. That they extend uniformly to all integers $n$ and $k$ (though,
as illustrated by \eqref{eq:qbin:pascal:intro} and item \ref{i:qbin:neg}, some
care has to be applied when generalizing certain properties) serves as a first
indication that the generalized $q$-binomial coefficients are natural objects.
For \ref{i:qbin:neg}, the sign function $\operatorname{sgn} (k)$ is defined to be $1$
if $k \geq 0$, and $- 1$ if $k < 0$.

\begin{lemma}
  \label{lem:qbin:basic}For all integers $n$ and $k$,
  \begin{enumerate}
    \item \label{i:qbin:qinv}$\binom{n}{k}_q = q^{k (n - k)}
    \binom{n}{k}_{q^{- 1}}$,
    
    \item \label{i:qbin:sym}$\binom{n}{k}_q = \binom{n}{n - k}_q$,
    
    \item \label{i:qbin:neg}$\binom{n}{k}_q = (- 1)^k \operatorname{sgn} (k)
    q^{\frac{1}{2} k (2 n - k + 1)} \binom{k - n - 1}{k}_q$,
    
    \item \label{i:qbin:hyp:nk}$\binom{n}{k}_q = \frac{1 - q^n}{1 - q^k}
    \binom{n - 1}{k - 1}_q$ if $k \neq 0$.
  \end{enumerate}
\end{lemma}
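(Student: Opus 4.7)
The plan is to verify each part directly from the limit definition \eqref{eq:qbin:def:pochlim}, using only standard manipulations of the $q$-Pochhammer symbol that remain valid for negative indices. Part (b) is immediate, since the defining quotient is symmetric in $k \leftrightarrow n-k$. Part (d) follows from the recursion $(a;q)_n = (1-aq^{n-1})(a;q)_{n-1}$, which holds for all integers $n$ under the extended definition: applying it to $(a;q)_n$ and to the factor $(a;q)_k$ in the denominator of \eqref{eq:qbin:def:pochlim} and then letting $a\to q$ produces the claimed ratio $(1-q^n)/(1-q^k)$, and the hypothesis $k\neq 0$ is precisely what prevents the factor $1-aq^{k-1}$ from vanishing in the limit.

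Part (a) rests on the $q$-inversion identity
\[
(a;q^{-1})_n = (-a)^n q^{-\binom{n}{2}}(a^{-1};q)_n,
\]
which I would verify for $n\geq 0$ by factoring $-aq^{-j}$ out of each term in $\prod_{j=0}^{n-1}(1-aq^{-j})$, and then extend to $n<0$ by the analogous manipulation of the reciprocal product. Substituting this identity into all three Pochhammer factors in the limit defining $\binom{n}{k}_{q^{-1}}$ and setting $b=a^{-1}$ (so that $b\to q$ as $a\to q^{-1}$) reduces the limit to $\binom{n}{k}_q$ times $q$ to the power $-\binom{n}{2}+\binom{k}{2}+\binom{n-k}{2}$; a short algebraic simplification shows this exponent equals $-k(n-k)$.

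Part (c) is the most substantial, and I expect handling the case $k<0$ to be the main obstacle. My plan is to first establish, for $k\geq 0$ and any integer $n$, the product formula
\[
\binom{n}{k}_q = \prod_{m=1}^{k}\frac{1-q^{n-k+m}}{1-q^m},
\]
from the observation that $(a;q)_n/(a;q)_{n-k}$ telescopes to $\prod_{j=n-k}^{n-1}(1-aq^j)$ (a product of exactly $k$ factors) regardless of the signs of $n$ and $n-k$. Applying this formula also to $\binom{k-n-1}{k}_q$ and using the elementary identity $1-q^{m-n-1}=-q^{m-n-1}(1-q^{n+1-m})$ factor by factor, followed by the re-indexing $m\mapsto k+1-m$, matches the two products term by term and produces the prefactor $(-1)^{k}q^{k(2n-k+1)/2}$; this proves (c) whenever $k\geq 0$, where $\operatorname{sgn}(k)=1$. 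For $k<0$, I split into sub-cases based on the position of $n$: when $n\geq 0$ or $n<k$ both sides of (c) vanish (the LHS by direct inspection of \eqref{eq:qbin:def:pochlim}, where one factor $1-aq^{-1}$ of $1/(a;q)_k$ vanishes at $a=q$ while no factor cancels it; the RHS because $\binom{k-n-1}{k}_q$ then falls into a configuration already shown to vanish); when $k\leq n<0$, the symmetry (b) reduces $\binom{n}{k}_q=\binom{n}{n-k}_q$ to the already-proved case of (c) with parameter $n-k\geq 0$, after which a second application of (b) to $\binom{k-n-1}{k}_q=\binom{k-n-1}{-n-1}_q$ and a short bookkeeping of signs and powers of $q$ matches the two sides. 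The extra sign $\operatorname{sgn}(k)=-1$ appearing in the statement is exactly what this bookkeeping produces.
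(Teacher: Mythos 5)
Your proposal is correct, but it reaches parts \ref{i:qbin:qinv} and \ref{i:qbin:neg} by a genuinely different route than the paper. For \ref{i:qbin:sym} and \ref{i:qbin:hyp:nk} you do exactly what the paper does (read them off the limit definition, using $(a;q)_n=(1-aq^{n-1})(a;q)_{n-1}$ for all integers $n$). For \ref{i:qbin:qinv} the paper deduces the statement from the Pascal relation \eqref{eq:qbin:pascal:intro} together with \ref{i:qbin:sym} (an inductive argument), whereas your direct substitution of $(a;q^{-1})_n=(-a)^nq^{-\binom{n}{2}}(a^{-1};q)_n$ into all three Pochhammer factors is a self-contained computation that does not rely on Section~\ref{sec:pascal}; the exponent bookkeeping $-\binom{n}{2}+\binom{k}{2}+\binom{n-k}{2}=-k(n-k)$ checks out. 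For \ref{i:qbin:neg} the paper (Theorem~\ref{thm:qbin:neg}) gives a single uniform manipulation valid for all $n,k$ at once, built on $(a;q)_n(aq^n;q)_k=(a;q)_{n+k}$, the inversion \eqref{eq:poch:inva}, and the limit $\lim_{a\to q}(q^2/a;q)_n/(a;q)_n=\operatorname{sgn}(n)$, which is where the sign function enters; you instead split on the sign of $k$, proving the $k\ge 0$ case by an explicit term-by-term matching of the finite products $\prod_{m=1}^k(1-q^{n-k+m})/(1-q^m)$, and handling $k<0$ by vanishing arguments plus a double application of \ref{i:qbin:sym} that reduces to the already-proved case. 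I verified your sign and exponent bookkeeping in the reduction ($(-1)^{n-k}(-1)^{n+1}=(-1)^k\operatorname{sgn}(k)$ and $\tfrac12[(n-k)(n+k+1)+(n+1)(2k-n)]=\tfrac12 k(2n-k+1)$); it works. Your approach is more elementary and makes the appearance of $\operatorname{sgn}(k)$ transparent, at the cost of case analysis, while the paper's is uniform. One small imprecision: in the vanishing sub-case $k<0$, $n<k$, it is not true that ``no factor cancels'' the zero of $1/(a;q)_k$ at $a=q$, since $(a;q)_n$ then contributes a simple pole there; but $1/(a;q)_{n-k}$ contributes a second simple zero, so the net order of vanishing is still $2-1=1$ and the limit is indeed $0$. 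This is a one-line fix, not a gap.
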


Properties \ref{i:qbin:sym} and \ref{i:qbin:hyp:nk} follow directly from the
definition \eqref{eq:qbin:def:pochlim}, while property \ref{i:qbin:qinv} is
readily deduced from \eqref{eq:qbin:pascal:intro} combined with
\ref{i:qbin:sym}. In the classical case $n, k \geq 0$, property
\ref{i:qbin:qinv} reflects the fact that the $q$-binomial coefficient is a
self-reciprocal polynomial in $q$ of degree $k (n - k)$. In contrast to that
and as illustrated in Example~\ref{eg:qbin:neg}, the $q$-binomial coefficients
with negative entries are Laurent polynomials, whose degrees are recorded in
Corollary~\ref{cor:qbin:deg}.

The reflection rule \ref{i:qbin:neg} is the subject of
Section~\ref{sec:reflect} and is proved in Theorem~\ref{thm:qbin:neg}. Rule
\ref{i:qbin:neg} reduced to the case $q = 1$ is the main object in
\cite{sprugnoli-binom}, where Sprugnoli observed the necessity of including
the sign function when extending the binomial coefficient to negative entries.
Sprugnoli further realized that the basic symmetry \ref{i:qbin:sym} and the
negation rule \ref{i:qbin:neg} act on binomial coefficients as a group of
transformations isomorphic to the symmetric group on three letters. In
Section~\ref{sec:reflect}, we observe that the same is true for $q$-binomial
coefficients.

Note that property \ref{i:qbin:hyp:nk}, when combined with \ref{i:qbin:sym},
implies that, for $n \neq k$,
\begin{equation*}
  \binom{n}{k}_q = \frac{1 - q^n}{1 - q^{n - k}} \binom{n - 1}{k}_q .
\end{equation*}
In particular, the $q$-binomial coefficient is a $q$-hypergeometric term.

\begin{example}
  It follows from Lemma~{{\ref{lem:qbin:basic}}}\ref{i:qbin:neg} that, for all
  integers $k$,
  \begin{equation*}
    \binom{- 1}{k}_q = (- 1)^k \operatorname{sgn} (k)  \frac{1}{q^{k (k + 1) / 2}} .
  \end{equation*}
\end{example}

In Section~\ref{sec:combinatorial}, we review the remarkable and beautiful
observation of Loeb \cite{loeb-neg} that the combinatorial interpretation of
binomial coefficients as counting subsets can be naturally extended to the
case of negative entries. We then prove that this interpretation can be
generalized to $q$-binomial coefficients. Theorem~\ref{thm:qbin:subset}, our
main result of that section, is a precise version of the following.

\begin{theorem}
  \label{thm:qbin:subset:intro}For all integers $n$ and $k$,
  \begin{equation*}
    \binom{n}{k}_q = \pm \sum_Y q^{\sigma (Y) - k (k - 1) / 2},
  \end{equation*}
  where the sum is over all $k$-element subsets $Y$ of the $n$-element set
  $X_n$.
\end{theorem}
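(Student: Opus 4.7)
The plan is to first pin down the precise meaning of the objects on the right-hand side following Loeb, and then verify the identity case-by-case by reducing to the classical range $0 \leq k \leq n$ via the relations in Lemma~\ref{lem:qbin:basic}. Concretely, I would take $X_n = \{0, 1, \ldots, n-1\}$ when $n \geq 0$ and, when $n < 0$, let $X_n$ be a totally ordered set of $|n|$ anti-elements, which I would label by $n, n+1, \ldots, -1$. A $k$-element ``subset'' $Y$ of $X_n$ is meant in Loeb's extended sense (for $n < 0$ these are signed multisets on the underlying labels, and $k$ is allowed to be negative precisely in this regime). The statistic $\sigma(Y)$ would be the sum of labels of elements of $Y$, counted with signed multiplicity.

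The first case to handle is the classical range $0 \leq k \leq n$, where the identity
\[
  \binom{n}{k}_q \;=\; \sum_Y q^{\sigma(Y) - k(k-1)/2}
\]
(with $Y$ ranging over ordinary $k$-subsets of $\{0, 1, \ldots, n-1\}$) is a standard $q$-subset identity, provable by a short induction on $n$ using the Pascal relation \eqref{eq:qbin:pascal:intro}: splitting into subsets containing $0$ and those avoiding $0$ matches the two terms of \eqref{eq:qbin:pascal:intro}, the factor $q^k$ arising because shifting a $k$-subset of $\{1, \ldots, n-1\}$ down by one decreases $\sigma(Y)$ by exactly $k$. The case $n \geq 0$ with $k < 0$ or $k > n$ is trivial, since both sides vanish. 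For the remaining case $n < 0$, the plan is to apply Lemma~\ref{lem:qbin:basic}\ref{i:qbin:neg} to write
\[
  \binom{n}{k}_q \;=\; (-1)^k \operatorname{sgn}(k)\, q^{k(2n-k+1)/2} \binom{k-n-1}{k}_q,
\]
which places the right-hand binomial in the classical range (for $k \geq 0$ one has $0 \leq k \leq k-n-1$; for $k < 0$ one first applies \ref{i:qbin:sym} to move into that range), and then to exhibit a bijection between $k$-subsets of $X_n$ in Loeb's sense and ordinary $k$-subsets of $X_{k-n-1}$ under which $\sigma$ shifts by exactly $k(2n-k+1)/2$.

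The main obstacle will be this last bijective verification: matching the weight shift $k(2n-k+1)/2$ dictated by \ref{i:qbin:neg} against the change in $\sigma$ under the natural ``complement-and-reindex'' map between anti-subsets of $X_n$ and ordinary subsets of $X_{k-n-1}$. Once that is carried out, the sign $\pm$ in the theorem is forced to be $(-1)^k \operatorname{sgn}(k)$ when $n < 0$ and $+1$ otherwise, and the identity in every remaining case follows by combining the bijection with the classical base case.
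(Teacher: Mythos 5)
Your outline is correct, and it diverges from the paper's argument in the cases that matter. Both proofs handle $0 \leq k \leq n$ classically and dispose of $n \geq 0 > k$ (and $0 > k > n$) by noting both sides vanish, and both use the reflection machinery of Theorem~\ref{thm:qbin:neg} and Corollary~\ref{cor:qbin:forms} to identify the sign and the $q$-power prefactor. But for the two genuinely new regimes the paper does \emph{not} construct a bijection: after reflecting $\binom{n}{k}_q$ to a $q$-binomial coefficient with classical-range arguments, it proves the resulting identities (\eqref{eq:qbin:subset:2} for $n < 0 \leq k$ and \eqref{eq:qbin:subset:3} for $k \leq n < 0$) by induction on $(n,k)$, splitting the $k$-subsets according to whether they contain a distinguished element and matching the two pieces against Pascal's relation \eqref{eq:qbin:pascal} or \eqref{eq:qbin:pascal2}. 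Your plan instead finishes bijectively: for $n < 0 \leq k$ the relabeling $y \mapsto y + |n|$ followed by the staircase map $\{a_1 \leq \cdots \leq a_k\} \mapsto \{a_1, a_2+1, \ldots, a_k + k - 1\}$ from $k$-multisets to $k$-sets shifts $\sigma$ by exactly $-kn + k(k-1)/2 = -\tfrac{1}{2}k(2n-k+1)$, which is precisely the exponent demanded by \ref{i:qbin:neg}, so that case closes cleanly; for $k \leq n < 0$ the composite of two complementations (whose two sign reversals of $\sigma$ cancel) and a staircase again shifts $\sigma$ by a constant, which can then be pinned down by a direct computation. This is a legitimate and arguably more illuminating route --- the paper itself gestures at it in the remark identifying \eqref{eq:qbin:subset:2} as a $q$-analog of stars-and-bars --- at the cost of a somewhat fiddly bookkeeping of the constant in the doubly negative case, where the paper's induction is mechanical. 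One small imprecision to fix when writing this up: for $k \leq n < 0$, applying \ref{i:qbin:neg} alone to $\binom{n}{k}_q$ does not land in the classical range, and neither does a subsequent application of \ref{i:qbin:sym}; you should apply \ref{i:qbin:sym} \emph{first} (giving $\binom{n}{n-k}_q$ with $n - k \geq 0$) and then \ref{i:qbin:neg}, arriving at $\binom{-k-1}{n-k}_q$ with $0 \leq n-k \leq -k-1$, which is exactly the third line of Corollary~\ref{cor:qbin:forms} that the paper uses.
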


The notion of sets (and subsets) with a negative number of elements, as well
as the definitions of $\sigma$ and $X_n$, are deferred to
Section~\ref{sec:combinatorial}. In the previously known classical case $n, k
\geq 0$, the sign in that formula is positive, $X_n = \{ 0, 1, 2, \ldots,
n - 1 \}$, and $\sigma (Y)$ is the sum of the elements of $Y$. As an
application of Theorem~\ref{thm:qbin:subset:intro}, we demonstrate at the end
of Section~\ref{sec:combinatorial} how to deduce from it generalized versions
of the Chu-Vandermonde identity as well as the (commutative) $q$-binomial
theorem.

In Section~\ref{sec:binomialtheorem}, we discuss the binomial theorem, which
interprets the binomial coefficients as coefficients in the expansion of $(x +
y)^n$. Loeb showed that, by also considering expansions in inverse powers of
$x$, one can extend this interpretation to the case of binomial coefficients
with negative entries. Once more, we are able to show that the generalized
$q$-binomial coefficients share this property in a uniform fashion.

\begin{theorem}
  \label{thm:binomialtheorem:loeb:q:intro}Suppose that $y x = q x y$. Then,
  for all integers $n, k$,
  \begin{equation*}
    \binom{n}{k}_q = \{ x^k y^{n - k} \} (x + y)^n .
  \end{equation*}
\end{theorem}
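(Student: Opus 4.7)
The plan is induction on $|n|$, with the sign of $n$ dictating the formal ring in which $(x+y)^n$ is interpreted. For $n \ge 0$ the claim is the classical noncommutative $q$-binomial theorem, which I would prove by induction on $n$: the case $n=0$ is trivial, and for the inductive step I would write $(x+y)^n = (x+y)^{n-1}(x+y)$, apply the induction hypothesis to $(x+y)^{n-1}$, and use the consequence $y^j x = q^j x y^j$ of $yx = qxy$ to bring each monomial back into canonical form $x^a y^b$. Collecting the coefficient of $x^k y^{n-k}$ produces the recursion $c_{n,k} = q^{n-k} c_{n-1,k-1} + c_{n-1,k}$, which is the variant of the $q$-Pascal rule obtained by applying the symmetry \ref{i:qbin:sym} of Lemma~\ref{lem:qbin:basic} to \eqref{eq:qbin:pascal:intro}; together with the boundary values $\binom{n}{0}_q = \binom{n}{n}_q = 1$ this settles the case $n \ge 0$.

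For $n < 0$ the main conceptual step is to give $(x+y)^n$ meaning as a formal series. The relation $yx = qxy$ forces $yx^{-1} = q^{-1}x^{-1}y$ (and similarly $xy^{-1} = q^{-1}y^{-1}x$), so the subrings generated by $\{x^{\pm 1}, y\}$ and by $\{x, y^{\pm 1}\}$ remain $q$-commuting algebras, and I would work in the formal completions of each in which one expands in ascending powers of $yx^{-1}$, respectively of $xy^{-1}$. In the first completion, writing $(x+y)^{-1} = \sum_{j \ge 0} a_j x^{-1-j} y^j$ and imposing $(x+y)^{-1}(x+y) = 1$ produces a first-order recursion $a_j = -q^{-j} a_{j-1}$ with $a_0 = 1$, whose solution $a_j = (-1)^j q^{-j(j+1)/2}$ agrees with $\binom{-1}{-1-j}_q$ by the example following Lemma~\ref{lem:qbin:basic}. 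Thus the identity holds for $n = -1$ on the range $k \le -1$, and the symmetric expansion handles $k \ge 0$.

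The general case $n < -1$ then follows by induction on $|n|$ performed separately inside each completion, using $(x+y)^{n-1} = (x+y)^{-1}(x+y)^n$: multiply the known series for $(x+y)^{-1}$ against the one for $(x+y)^n$ supplied by the induction hypothesis, reorder monomials via $y^j x^{-1} = q^{-j} x^{-1} y^j$, and verify that the resulting coefficient recursion is again an instance of $q$-Pascal and so matches the one satisfied by $\binom{n}{k}_q$. The two expansions cover disjoint ranges of $k$ because $n < 0$, and $\binom{n}{k}_q$ vanishes in the remaining gap $n < k < 0$ — as is immediate from \eqref{eq:qbin:def:pochlim}, where the factor $(1 - aq^{-1})$ appears twice in the numerator but only once in the denominator, forcing the limit to vanish — so between the two expansions every nonzero $q$-binomial coefficient is recovered exactly once. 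The main obstacle is less the algebra than the bookkeeping: setting up the two completions unambiguously and verifying that the extraction $\{x^k y^{n-k}\}$ in each one recovers $\binom{n}{k}_q$ precisely on its prescribed range of $k$.
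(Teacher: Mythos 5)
Your overall architecture matches the paper's: dispose of $n\ge 0$ via the classical noncommutative $q$-binomial theorem, invert $(x+y)$ as a formal series in each of the two completions to settle $n=-1$, extend to all $n<0$ by induction on $|n|$ through multiplication by $(x+y)^{-1}$, and note that the two expansions cover $k\ge 0$ and $k\le n$ while $\binom{n}{k}_q$ vanishes in the gap $n<k<0$. The base case $n=-1$ (your recursion $a_j=-q^{-j}a_{j-1}$ and its solution) and the vanishing argument in the gap are correct.

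The gap is in the inductive step for $n<-1$. Multiplying the full series for $(x+y)^{-1}$ against the full series for $(x+y)^n$ does \emph{not} produce a two-term coefficient recursion: after reordering, the coefficient of $x^k y^{n-1-k}$ in the product is a convolution of the form $\sum_{j} q^{(k+1+j)j}\binom{-1}{-1-j}_q\binom{n}{k+1+j}_q$ over all admissible $j$, and the number of terms grows without bound as $k$ moves away from $n$. The identity needed to close the induction is therefore not ``an instance of $q$-Pascal'' but the generalized $q$-Chu--Vandermonde identity (the case $m=-1$ of \eqref{eq:chuvandermonde:pos}, respectively its analogue for negative $k$), a genuinely stronger statement that the paper proves separately beforehand via the combinatorial interpretation of Theorem~\ref{thm:qbin:subset}, precisely so that it is available at this point. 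As written, your step ``verify that the resulting coefficient recursion is again an instance of $q$-Pascal'' would fail. Two repairs are available: either establish the convolution identity first, or reorganize the induction so that only Pascal is used --- set $T_n=\sum_{k\le n}\binom{n}{k}_q x^k y^{n-k}$ in the completion, verify $T_{n-1}(x+y)=T_n$ coefficientwise (this genuinely is the two-term rule \eqref{eq:qbin:pascal2}, and the exceptional case $(n,k)=(0,0)$ causes no trouble because the offending term $\binom{-1}{0}_q$ lies outside the support of $T_{-1}$), and then cancel the factor $(x+y)$, which your $n=-1$ computation shows to be invertible in the completion. With either repair the argument goes through.
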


Here, the operator $\{ x^k y^{n - k} \}$, which is defined in
Section~\ref{sec:binomialtheorem}, extracts the coefficient of $x^k y^{n - k}$
in the appropriate expansion of what follows.

A famous theorem of Lucas \cite{lucas78} states that, if $p$ is a prime,
then
\begin{equation*}
  \binom{n}{k} \equiv \binom{n_0}{k_0} \binom{n_1}{k_1} \cdots
   \binom{n_d}{k_d} \quad (\operatorname{mod} p),
\end{equation*}
where $n_i$ and $k_i$ are the $p$-adic digits of the nonnegative integers $n$
and $k$, respectively. In Section~\ref{sec:lucas}, we show that this
congruence in fact holds for all integers $n$ and $k$. In fact, in
Section~\ref{sec:lucas:q}, we prove that generalized Lucas congruences
uniformly hold for $q$-binomial coefficients.

\begin{theorem}
  \label{thm:qbin:lucas:intro}Let $m \geq 2$ be an integer. Then, for all
  integers $n$ and $k$,
  \begin{equation*}
    \binom{n}{k}_q \equiv \binom{n_0}{k_0}_q \binom{n'}{k'} \quad (\operatorname{mod}
     \Phi_m (q)),
  \end{equation*}
  where $n = n_0 + n' m$ and $k = k_0 + k' m$ with $n_0, k_0 \in \{ 0, 1,
  \ldots, m - 1 \}$.
\end{theorem}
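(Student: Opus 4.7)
The plan is to first establish the classical case $n, k \geq 0$ (Désarménien's $q$-analog of Lucas' theorem) and then reduce the remaining cases to it using the basic properties in Lemma~\ref{lem:qbin:basic}; the key tools are the reflection rule \ref{i:qbin:neg} and the symmetry \ref{i:qbin:sym}.

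For $n, k \geq 0$, I would start from Gauss's $q$-binomial identity
\[
  \sum_{k \geq 0} q^{k(k-1)/2} \binom{n}{k}_q x^k = \prod_{i=0}^{n-1}(1 + q^i x),
\]
together with the congruence $\prod_{i=0}^{m-1}(1 + q^i x) \equiv 1 - (-x)^m \pmod{\Phi_m(q)}$, which follows because $1, q, \ldots, q^{m-1}$ run through the $m$-th roots of unity modulo $\Phi_m(q)$. Factoring the product as $\prod_{i=0}^{n_0-1}(1 + q^i x) \cdot \bigl[\prod_{i=0}^{m-1}(1 + q^i x)\bigr]^{n'}$ and then comparing coefficients of $x^k$ with $k = k_0 + k'm$ yields the classical congruence.

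Next, I would dispose of the cases in which $\binom{n}{k}_q = 0$. From the defining limit \eqref{eq:qbin:def:pochlim} one verifies that this vanishing occurs exactly when ($n \geq 0$ and $k \notin \{0, 1, \ldots, n\}$) or when ($n < 0$ and $n < k < 0$), and in each situation the right-hand side of the theorem also vanishes: either $\binom{n_0}{k_0}_q = 0$ (when $k_0 > n_0$), or $\binom{n'}{k'} = 0$ in Loeb's extension. The case $n < 0$, $k \leq n$ reduces to the case $n < 0$, $k \geq 0$ via the symmetry \ref{i:qbin:sym}.

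The essential remaining case is $n < 0$, $k \geq 0$. Setting $N := k - n - 1 \geq 0$, rule \ref{i:qbin:neg} gives $\binom{n}{k}_q = (-1)^k q^{k(2n-k+1)/2} \binom{N}{k}_q$, and the classical result supplies $\binom{N}{k}_q \equiv \binom{N_0}{k_0}_q \binom{N'}{k'} \pmod{\Phi_m(q)}$. Writing $N$ in base $m$ leads to two subcases: if $k_0 > n_0$, then $\binom{N_0}{k_0}_q = 0$ matches $\binom{n_0}{k_0}_q = 0$; if $k_0 \leq n_0$, then $N_0 = m + k_0 - n_0 - 1$ and $N' = k' - n' - 1$, and the classical reflection $\binom{N'}{k'} = (-1)^{k'}\binom{n'}{k'}$ together with a second application of \ref{i:qbin:neg} to $\binom{n_0}{k_0}_q$ (reducing exponents via $q^m \equiv 1$) finishes the argument. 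The main obstacle will be this final bookkeeping: one must verify that the accumulated prefactor $(-1)^{k+k'+k_0}\, q^{[k(2n-k+1) - k_0(2n_0-k_0+1)]/2}$ is trivial modulo $\Phi_m(q)$, which comes down to a careful parity check using $k \equiv k_0$ and $n \equiv n_0 \pmod{m}$.
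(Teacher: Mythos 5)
Your proposal is correct in outline, but it takes a genuinely different route from the paper. The paper proves the congruence uniformly for all integers $n,k$ at once, with no case analysis: starting from the noncommutative relation $(x+y)^m\equiv x^m+y^m \pmod{\Phi_m(q)}$ for $yx=qxy$ (Theorem~\ref{thm:binomialtheorem:q}), it deduces $(x+y)^{nm}\equiv(x^m+y^m)^{n}$ for \emph{every} integer $n$ in the sense of the two expansions of Section~\ref{sec:binomialtheorem}, extracts the coefficient of $x^ky^{n-k}$ via Theorem~\ref{thm:binomialtheorem:loeb:q}, and finishes with $q^{m^2}\equiv 1$. Your route instead proves the classical case from the commutative Gauss identity and then transports it to negative entries by \ref{i:qbin:neg} and \ref{i:qbin:sym}; this is more elementary (no $q$-commuting variables, no negative-exponent noncommutative binomial theorem) but pays with the digit case analysis and a root-of-unity computation. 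Two steps of your reduction need to be made explicit before the argument closes. First, applying \ref{i:qbin:neg} to $\binom{n_0}{k_0}_q$ yields $\binom{k_0-n_0-1}{k_0}_q=\binom{N_0-m}{k_0}_q$, not $\binom{N_0}{k_0}_q$, and the needed congruence $\binom{N_0-m}{k_0}_q\equiv\binom{N_0}{k_0}_q\pmod{\Phi_m(q)}$ does \emph{not} follow from $q^m\equiv1$ alone (these Laurent polynomials are not equal up to a power of $q^m$). It is cleaner to compare $\binom{n_0}{k_0}_q$ and $\binom{N_0}{k_0}_q$ directly at a primitive $m$th root of unity $\zeta$: writing both as products over $i=1,\dots,k_0$, the exponents occurring in one are the negatives of those in the other, and $1-\zeta^{-a}=-\zeta^{-a}(1-\zeta^a)$ gives $\binom{n_0}{k_0}_\zeta=(-1)^{k_0}\zeta^{k_0(2n_0-k_0+1)/2}\binom{N_0}{k_0}_\zeta$, with no vanishing factors since $0\le k_0\le n_0\le m-1$. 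Second, your prefactor identity is true but the check splits on the parity of $m$: with $E=\tfrac12\bigl[k(2n-k+1)-k_0(2n_0-k_0+1)\bigr]$ one computes $E=\tfrac{m}{2}B$ where $B\equiv k'\pmod 2$; for $m$ even this gives $\zeta^E=(-1)^{k'}=(-1)^{k+k'+k_0}$ since $k+k'+k_0=2k_0+k'(m+1)\equiv k'$, while for $m$ odd integrality forces $B$ even and $k+k'+k_0$ is automatically even, so both sides equal $1$. With these two points supplied, and the routine verifications for the vanishing regions and the symmetry reduction when $k\le n<0$, your proof goes through.
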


Here, $\Phi_m (q)$ is $m$th cyclotomic polynomial. The classical special case
$n, k \geq 0$ of this result has been obtained by Olive
\cite{olive-pow} and D\'esarm\'enien \cite{desarmenien-q}.

We conclude this introduction with some comments on alternative approaches to
and conventions for binomial coefficients with negative entries. In
particular, we remark on the current state of computer algebra systems and how
it could benefit from the generalized $q$-binomial coefficients introduced in
this paper.

\begin{remark}
  \label{rk:bin:gamma}Using the gamma function, binomial coefficients can be
  introduced as
  \begin{equation}
    \binom{n}{k} \assign \frac{\Gamma (n + 1)}{\Gamma (k + 1) \Gamma (n - k +
    1)} \label{eq:binom:gamma}
  \end{equation}
  for all complex $n$ and $k$ such that $n, k \not\in \{ - 1, - 2, \ldots \}$.
  This definition, however, does not immediately lend itself to the case of
  negative integers; though the structure of poles (and lack of zeroes) of the
  underlying gamma function is well understood, the binomial function
  \eqref{eq:binom:gamma} has a subtle structure when viewed as a function of
  two variables. For a study of this function, as well as a historical account
  on binomials, we refer to \cite{fowler-bin}. A natural way to extend
  \eqref{eq:binom:gamma} to negative integers is to set
  \begin{equation}
    \binom{n}{k} \assign \lim_{\varepsilon \rightarrow 0} \frac{\Gamma (n + 1
    + \varepsilon)}{\Gamma (k + 1 + \varepsilon) \Gamma (n - k + 1 +
    \varepsilon)}, \label{eq:binom:gamma:eps}
  \end{equation}
  where $n$ and $k$ are now allowed to take any complex values. This is in
  fact the definition that Loeb \cite{loeb-neg} and Sprugnoli
  \cite{sprugnoli-binom} adopt. (That the $q$-binomial coefficients we
  introduce in \eqref{eq:qbin:def:pochlim} reduce to
  \eqref{eq:binom:gamma:eps} when $q = 1$ can be seen, for instance, from
  observing that the Pascal relation \eqref{eq:qbin:pascal:intro} reduces to
  the relation established by Loeb for \eqref{eq:binom:gamma:eps}.)
\end{remark}

\begin{remark}
  \label{rk:conventions}Other conventions for binomial coefficients with
  negative integer entries exist and have their merit. Most prominently, if,
  for instance, one insists that Pascal's relation
  \eqref{eq:qbin:pascal:intro} should hold for all integers $n$ and $k$, then
  the resulting version of the binomial coefficients is zero when $k < 0$. On
  the other hand, as illustrated by the results in \cite{loeb-neg} and this
  paper, it is reasonable and preferable for many purposes to extend the
  classical binomial coefficients (as well as its polynomial counterpart) to
  negative arguments as done here.
  
  As an unfortunate consequence, both conventions are implemented in current
  computer algebra systems, which can be a source of confusion. For instance,
  SageMath currently (as of version $8.0$) uses the convention that all
  binomial coefficients with $k < 0$ are evaluated as zero. On the other hand,
  recent versions of Mathematica (at least version $9$ and higher) and Maple
  (at least version $18$ and higher) evaluate binomial coefficients with
  negative entries in the way advertised in \cite{loeb-neg} and here.
  
  In version 7, Mathematica introduced the \texttt{QBinomial[n,k,q]}
  function; however, as of version 11, this function evaluates the
  $q$-binomial coefficient as zero whenever $k < 0$. Similarly, Maple
  implements these coefficients as \texttt{QBinomial(n,k,q)}, but, as of
  version 18, choosing $k < 0$ results in a division-by-zero error. We hope
  that this paper helps to adjust these inconsistencies with the classical
  case $q = 1$ by offering a natural extension of the $q$-binomial coefficient
  for negative entries.
\end{remark}

\section{Characterization via a $q$-Pascal relation}\label{sec:pascal}

The generalization of the binomial coefficients to negative entries by Loeb
satisfies Pascal's rule
\begin{equation}
  \binom{n}{k} = \binom{n - 1}{k - 1} + \binom{n - 1}{k} \label{eq:bin:pascal}
\end{equation}
for all integers $n$ and $k$ that are not both zero
\cite[Proposition~4.4]{loeb-neg}. In this brief section, we show that the
$q$-binomial coefficients (with arbitrary integer entries), defined in
\eqref{eq:qbin:def:pochlim}, are also characterized by a $q$-analog of the
Pascal rule. It is well-known that this is true for the familiar $q$-binomial
coefficients when $n, k \geq 0$ (see, for instance,
\cite[Proposition~6.1]{kc-q}).

\begin{lemma}
  \label{lem:qbin:pascal}For integers $n$ and $k$, the $q$-binomial
  coefficients are characterized by
  \begin{equation}
    \binom{n}{k}_q = \binom{n - 1}{k - 1}_q + q^k \binom{n - 1}{k}_q,
    \label{eq:qbin:pascal}
  \end{equation}
  provided that $(n, k) \neq (0, 0)$, together with the initial conditions
  \begin{equation*}
    \binom{n}{0}_q = \binom{n}{n}_q = 1.
  \end{equation*}
\end{lemma}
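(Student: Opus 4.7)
The plan is to establish \eqref{eq:qbin:pascal} first as an identity of rational functions in $a$ and then pass to the limit $a \to q$ in \eqref{eq:qbin:def:pochlim}. The initial conditions are immediate: substituting $k = 0$ or $k = n$ in the defining quotient yields the constant $1$, so $\binom{n}{0}_q = \binom{n}{n}_q = 1$ for every integer $n$.

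The key algebraic input is the recursion $(a;q)_m = (1 - aq^{m-1})(a;q)_{m-1}$, which, thanks to the extended definition of $(a;q)_m$, holds for every integer $m$. Writing $B(n, k; a) \assign (a;q)_n/[(a;q)_k (a;q)_{n-k}]$, applying this recursion to each of the three factors gives
\begin{equation*}
  \frac{B(n, k; a)}{B(n-1, k-1; a)} = \frac{1 - aq^{n-1}}{1 - aq^{k-1}}, \qquad \frac{B(n-1, k; a)}{B(n-1, k-1; a)} = \frac{1 - aq^{n-k-1}}{1 - aq^{k-1}},
\end{equation*}
and a short simplification yields the rational-function identity
\begin{equation*}
  B(n, k; a) - B(n-1, k-1; a) - q^k B(n-1, k; a) = \frac{q^{k-1}(a - q)}{1 - aq^{k-1}}\, B(n-1, k-1; a).
\end{equation*}
Since each factor $(a;q)_m$ has at worst a simple pole at $a = q$, occurring only when $m < 0$, a short case analysis in the signs of $n-1$, $k-1$, $n-k$ shows that $B(n-1, k-1; a)$ is always finite at $a = q$. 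For $k \neq 0$ the denominator $1 - aq^{k-1}$ tends to the nonzero value $1 - q^k$ and the factor $(a - q)$ forces the right-hand side to $0$. The case $k = 0$ is the main technical point: the denominator $1 - aq^{-1}$ vanishes at $a = q$, but so does the factor $(1 - aq^{-1})$ sitting inside $B(n-1, -1; a) = (1 - aq^{-1})/(1 - aq^{n-1})$; after cancellation the right-hand side reduces to $q^{-1}(a - q)/(1 - aq^{n-1})$, which tends to $0$ precisely because we have excluded $n = 0$.

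For the uniqueness (characterization) part, I view Pascal as a first-order recurrence in $n$ for each fixed column $k$ and induct on $|k|$. Column $k = 0$ is supplied by the initial condition $\binom{n}{0}_q = 1$. For $k \geq 1$, once column $k - 1$ is known, the recurrence $\binom{n}{k}_q - q^k \binom{n-1}{k}_q = \binom{n-1}{k-1}_q$ (valid for all $n$, since $(n, k) \neq (0, 0)$ whenever $k \neq 0$) determines column $k$ up to a single additive constant, pinned down by $\binom{k}{k}_q = 1$. For $k \leq -1$, rewriting Pascal as $\binom{n}{k}_q = \binom{n+1}{k+1}_q - q^{k+1}\binom{n}{k+1}_q$ expresses column $k$ directly in terms of column $k + 1$; the only entry this procedure misses is $\binom{-1}{-1}_q$, which falls into the excluded case $(0,0)$ of the recurrence and is supplied by the diagonal initial condition.
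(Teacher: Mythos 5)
Your argument is correct and is essentially the paper's: both derive an exact rational-function identity for $(a;q)_n/[(a;q)_k(a;q)_{n-k}]$ and pass to the limit $a\to q$, with the excluded pair $(n,k)=(0,0)$ surfacing in the single degenerate case. The only cosmetic difference is that your error term carries $1-aq^{k-1}$ in the denominator, so your exceptional case is $k=0$, whereas the paper's coefficient carries $1-aq^{n-k-1}$ and its exceptional case is $n=k$; you also spell out the uniqueness step, which the paper dispatches in one sentence.
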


Observe that $\binom{0}{0}_q = 1$, while the corresponding right-hand side of
\eqref{eq:qbin:pascal} is $\binom{- 1}{- 1}_q + q^0 \binom{- 1}{0}_q = 2 \neq
1$, illustrating the need to exclude the case $(n, k) = (0, 0)$. It should
also be noted that the initial conditions are natural but not minimal: the
case $\binom{n}{0}_q$ with $n \leq - 2$ is redundant (but consistent).

\begin{proof}
  We note that the relation \eqref{eq:qbin:pascal} and the initial conditions
  indeed suffice to deduce values for each $q$-binomial coefficient. It
  therefore only remains to show that \eqref{eq:qbin:pascal} holds for the
  $q$-binomial coefficient as defined in \eqref{eq:qbin:def:pochlim}. For the
  purpose of this proof, let us write
  \begin{equation*}
    \binom{n}{k}_{a, q} \assign \frac{(a ; q)_n}{(a ; q)_k (a ; q)_{n - k}},
  \end{equation*}
  and observe that, for all integers $n$ and $k$,
  \begin{equation*}
    \binom{n - 1}{k}_{a, q} = \frac{1 - a q^{n - k - 1}}{1 - a q^{n - 1}}
     \binom{n}{k}_{a, q}
  \end{equation*}
  as well as
  \begin{equation*}
    \binom{n - 1}{k - 1}_{a, q} = \frac{1 - a q^{k - 1}}{1 - a q^{n - 1}}
     \binom{n}{k}_{a, q} .
  \end{equation*}
  It then follows that
  \begin{equation}
    \binom{n}{k}_{a, q} = \binom{n - 1}{k - 1}_{a, q} + a q^{k - 1} \frac{1 -
    q^{n - k}}{1 - a q^{n - k - 1}} \binom{n - 1}{k}_{a, q}
    \label{eq:qbin:pascal:a}
  \end{equation}
  for all integers $n$ and $k$. If $n \neq k$, then
  \begin{equation*}
    \lim_{a \rightarrow q} \left[ a q^{k - 1} \frac{1 - q^{n - k}}{1 - a q^{n
     - k - 1}} \right] = q^k,
  \end{equation*}
  so that \eqref{eq:qbin:pascal} follows for these cases. On the other hand,
  if $n = k$, then $\binom{n - 1}{k}_q = 0$, provided that $(n, k) \neq (0,
  0)$, so that \eqref{eq:qbin:pascal} also holds in the remaining cases.
\end{proof}

\begin{remark}
  Applying Pascal's relation \eqref{eq:qbin:pascal} to the right-hand side of
  Lemma~\ref{lem:qbin:basic}\ref{i:qbin:sym}, followed by applying the
  symmetry Lemma~\ref{lem:qbin:basic}\ref{i:qbin:sym} to each $q$-binomial
  coefficient, we find that Pascal's relation \eqref{eq:qbin:pascal} is
  equivalent to the alternative form
  \begin{equation}
    \binom{n}{k}_q = q^{n - k} \binom{n - 1}{k - 1}_q + \binom{n - 1}{k}_q .
    \label{eq:qbin:pascal2}
  \end{equation}
\end{remark}

\section{Reflection formulas}\label{sec:reflect}

In \cite{sprugnoli-binom}, Sprugnoli, likely unaware of the earlier work of
Loeb \cite{loeb-neg}, introduces binomial coefficients with negative entries
via the gamma function (see Remark~\ref{rk:bin:gamma}). Sprugnoli then
observes that the familiar negation rule
\begin{equation*}
  \binom{n}{k} = (- 1)^k \binom{k - n - 1}{k}
\end{equation*}
as stated, for instance, in \cite[Section~1.2.6]{knuth-acp1}, does not
continue to hold when $k$ is allowed to be negative. Instead, he shows that,
for all integers $n$ and $k$,
\begin{equation}
  \binom{n}{k} = (- 1)^k \operatorname{sgn} (k) \binom{k - n - 1}{k},
  \label{eq:bin:neg}
\end{equation}
where $\operatorname{sgn} (k) = 1$ for $k \geq 0$ and $\operatorname{sgn} (k) = - 1$ for
$k < 0$. We generalize this result to the $q$-binomial coefficients. Observe
that the result of Sprugnoli \cite{sprugnoli-binom} is immediately obtained
as the special case $q = 1$.

\begin{theorem}
  \label{thm:qbin:neg}For all integers $n$ and $k$,
  \begin{equation}
    \binom{n}{k}_q = (- 1)^k \operatorname{sgn} (k) q^{\frac{1}{2} k (2 n - k + 1)}
    \binom{k - n - 1}{k}_q . \label{eq:qbin:neg}
  \end{equation}
\end{theorem}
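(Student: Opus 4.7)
The plan is to split into cases according to the sign of $k$, since the $\operatorname{sgn}(k)$ factor in the statement behaves differently on either side of $k=0$.

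For $k\geq 0$ I would induct on $k$. The base $k=0$ is immediate. For the step $k\mapsto k+1$, apply Lemma~\ref{lem:qbin:basic}\ref{i:qbin:hyp:nk} (valid since $k+1\neq 0$) on both sides to rewrite $\binom{n}{k+1}_q = \tfrac{1-q^n}{1-q^{k+1}}\binom{n-1}{k}_q$ and $\binom{k-n}{k+1}_q = \tfrac{1-q^{k-n}}{1-q^{k+1}}\binom{k-n-1}{k}_q$. Then substitute the induction hypothesis at $(n-1,k)$ on the former, and use the alternative form from the remark after Lemma~\ref{lem:qbin:basic} (namely $\binom{k-n}{k}_q = \tfrac{1-q^{k-n}}{1-q^{-n}}\binom{k-n-1}{k}_q$ for $n\neq 0$) to eliminate $\binom{k-n-1}{k}_q$ in favor of $\binom{k-n}{k}_q$. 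The resulting identity reduces, after cancellation, to $1-q^{-n} = -q^{-n}(1-q^n)$ together with the arithmetic check $\tfrac{1}{2}k(2n-k-1) + n = \tfrac{1}{2}(k+1)(2n-k)$. The degenerate subcases ($n = 0$ or $\binom{k-n}{k}_q = 0$) make both sides of the reflection zero, so they cause no trouble.

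For $k<0$ I would reduce to the proved case via alternating applications of the symmetry \ref{i:qbin:sym} and the $k\geq 0$ reflection. When $k\leq n\leq -1$, both $n-k$ and $-n-1$ are nonnegative, so the chain
\[
\binom{n}{k}_q \overset{\text{sym}}{=} \binom{n}{n-k}_q \longrightarrow \binom{-k-1}{n-k}_q \overset{\text{sym}}{=} \binom{-k-1}{-n-1}_q \longrightarrow \binom{k-n-1}{-n-1}_q \overset{\text{sym}}{=} \binom{k-n-1}{k}_q
\]
is well-defined, with each long arrow an application of the proved reflection. Tracking signs yields $(-1)^{n-k}(-1)^{-n-1} = -(-1)^k$, which equals $(-1)^k\operatorname{sgn}(k)$ for $k<0$, and the accumulated exponent of $q$ simplifies as $\tfrac{1}{2}[(n-k)(n+k+1) + (n+1)(2k-n)] = \tfrac{1}{2}k(2n-k+1)$ after elementary expansion. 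For $n$ outside $\{k,\ldots,-1\}$ (that is, $n\geq 0$ or $n\leq k-1$), a direct inspection of the Pochhammer formula \eqref{eq:qbin:def:pochlim} shows that both $\binom{n}{k}_q$ and $\binom{k-n-1}{k}_q$ vanish: in each such case, after combining the various negative-index Pochhammers into a single ratio of products of factors $(1-aq^{-j})$, one finds that the numerator retains an uncancelled factor $(1-aq^{-1})$ vanishing at $a=q$, while the denominator remains finite and nonzero.

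The main obstacle will be the compound bookkeeping of signs and $q$-exponents in the three-arrow chain of the $k<0$ case, and verifying that the admissible range $k\leq n\leq -1$ matches exactly the range where both sides of the reflection are nonzero, so that the vanishing argument uniformly covers the remaining subcases.
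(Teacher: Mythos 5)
Your proposal is correct, but it takes a genuinely different route from the paper. The paper's proof is a single uniform computation directly from the limit definition \eqref{eq:qbin:def:pochlim}: it combines the Pochhammer identities $(a;q)_n(aq^n;q)_k=(a;q)_{n+k}$ and $(a;q)_n=(-a)^nq^{n(n-1)/2}(q^{-n+1}/a;q)_n$ to rewrite $(a;q)_n/(a;q)_{n-k}$, and the sign $\operatorname{sgn}(k)$ emerges from the key limit $\lim_{a\to q}(q^2/a;q)_n/(a;q)_n=\operatorname{sgn}(n)$; there is no case split on the sign of $k$ and no induction. You instead prove the $k\ge 0$ case by induction on $k$ using the contiguous relation Lemma~\ref{lem:qbin:basic}\ref{i:qbin:hyp:nk} (your exponent and sign checks, $1-q^{-n}=-q^{-n}(1-q^n)$ and $\tfrac12 k(2n-k-1)+n=\tfrac12(k+1)(2n-k)$, are right, and the degenerate subcases $n=0$, $k=n$, and $0<n\le k$ do reduce to $0=0$ as you claim), and then bootstrap to $k<0$ via the symmetry/reflection chain --- which is essentially the content of Corollary~\ref{cor:qbin:forms}, derived in the paper only \emph{after} the theorem, so your argument in effect proves that corollary from the half of the theorem already established (the sign and exponent bookkeeping, $(-1)^{n-k}(-1)^{-n-1}=-(-1)^k$ and $\tfrac12[(n-k)(n+k+1)+(n+1)(2k-n)]=\tfrac12 k(2n-k+1)$, checks out, and there is no circularity since only parts \ref{i:qbin:sym} and \ref{i:qbin:hyp:nk} of Lemma~\ref{lem:qbin:basic} are used). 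Your vanishing analysis for $n\ge 0>k$ and $n\le k-1$ is also sound: after clearing the negative-index Pochhammers, exactly one factor $(1-aq^{-1})$ survives in the numerator. What your approach buys is elementarity --- only the hypergeometric recurrence, the symmetry, and direct inspection of the definition are needed, with no Pochhammer inversion identity; what it costs is a three-way case split with nontrivial bookkeeping, where the paper's argument is uniform in $n$ and $k$ and isolates the sign phenomenon in one clean limit evaluation.
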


\begin{proof}
  Let us begin by observing that, for all integers $n$ and $k$,
  \begin{equation}
    (a ; q)_n (a q^n ; q)_k = (a ; q)_{n + k} . \label{eq:poch:split}
  \end{equation}
  Further, for all integers $n$,
  \begin{equation}
    (a ; q)_n = (- a)^n q^{n (n - 1) / 2} (q^{- n + 1} / a ; q)_n .
    \label{eq:poch:inva}
  \end{equation}
  Applying \eqref{eq:poch:split} and then \eqref{eq:poch:inva}, we find that
  \begin{equation*}
    \frac{(a ; q)_n}{(a ; q)_{n - k}} = \frac{1}{(a q^n ; q)_{- k}} =
     \frac{(- a)^k q^{\frac{1}{2} k (2 n - k - 1)}}{(q^{k - n + 1} / a ; q)_{-
     k}} .
  \end{equation*}
  By another application of \eqref{eq:poch:split},
  \begin{equation*}
    \frac{1}{(q^{k - n + 1} / a ; q)_{- k}} = \frac{(1 / a ; q)_{k - n +
     1}}{(1 / a ; q)_{- n + 1}} = \frac{(q^2 / a ; q)_{k - n - 1}}{(q^2 / a ;
     q)_{- n - 1}},
  \end{equation*}
  where, for the second equality, we used the basic relation $(a ; q)_n = (1 -
  a) (a q ; q)_{n - 1}$ twice for each Pochhammer symbol. Combined, we thus
  have
  \begin{equation*}
    \frac{(a ; q)_n}{(a ; q)_{n - k}} = (- a)^k q^{\frac{1}{2} k (2 n - k -
     1)} \frac{(q^2 / a ; q)_{k - n - 1}}{(q^2 / a ; q)_{- n - 1}}
  \end{equation*}
  for all integers $n$ and $k$. Suppose we have already shown that, for any
  integer $n$,
  \begin{equation}
    \lim_{a \rightarrow q} \frac{(q^2 / a ; q)_n}{(a ; q)_n} = \operatorname{sgn} (n)
    . \label{eq:poch:sgn}
  \end{equation}
  Then,
  \begin{eqnarray*}
    \binom{n}{k}_q & = & \lim_{a \rightarrow q} \frac{(a ; q)_n}{(a ; q)_k (a
    ; q)_{n - k}}\\
    & = & \lim_{a \rightarrow q} (- a)^k q^{\frac{1}{2} k (2 n - k - 1)}
    \frac{(q^2 / a ; q)_{k - n - 1}}{(a ; q)_k (q^2 / a ; q)_{- n - 1}}\\
    & = & \operatorname{sgn} (k - n - 1) \operatorname{sgn} (- n - 1) \lim_{a \rightarrow q}
    (- a)^k q^{\frac{1}{2} k (2 n - k - 1)} \frac{(a ; q)_{k - n - 1}}{(a ;
    q)_k (a ; q)_{- n - 1}}\\
    & = & (- 1)^k \operatorname{sgn} (k) q^{\frac{1}{2} k (2 n - k + 1)} \binom{k - n
    - 1}{k}_q .
  \end{eqnarray*}
  For the final equality, we used that
  \begin{equation*}
    \operatorname{sgn} (k - n - 1) \operatorname{sgn} (- n - 1) = \operatorname{sgn} (k)
  \end{equation*}
  whenever the involved $q$-binomial coefficients are different from zero (for
  more details on this argument, see \cite[Theorem~2.2]{sprugnoli-binom}).
  
  It remains to show \eqref{eq:poch:sgn}. The limit clearly is $1$ if $n
  \geq 0$. On the other hand, if $n < 0$, then
  \begin{eqnarray*}
    \lim_{a \rightarrow q} \frac{(q^2 / a ; q)_n}{(a ; q)_n} & = & \lim_{a
    \rightarrow q} \frac{\left(1 - \frac{a}{q} \right) \left(1 -
    \frac{a}{q^2} \right) \cdots \left(1 - \frac{a}{q^n} \right)}{\left(1 -
    \frac{q}{a} \right) \left(1 - \frac{1}{a} \right) \cdots \left(1 -
    \frac{1}{a q^{n - 2}} \right)}\\
    & = & \lim_{a \rightarrow q} \frac{\left(1 - \frac{a}{q} \right)}{\left(1 - \frac{q}{a} \right)} = - 1,
  \end{eqnarray*}
  as claimed.
\end{proof}

It was observed in \cite[Theorem~3.2]{sprugnoli-binom} that the basic
symmetry (Lemma~\ref{lem:qbin:basic}\ref{i:qbin:sym}) and the negation rule
\eqref{eq:qbin:neg} act on (formal) binomial coefficients as a group of
transformations isomorphic to the symmetric group on three letters. The same
is true for $q$-binomial coefficients. Since the argument is identical, we
only record the resulting six forms for the $q$-binomial coefficients.

\begin{corollary}
  \label{cor:qbin:forms}For all integers $n$ and $k$,
  \begin{eqnarray*}
    \binom{n}{k}_q & = & \binom{n}{n - k}_q\\
    & = & (- 1)^{n - k} \operatorname{sgn} (n - k) q^{\frac{1}{2} (n (n + 1) - k (k +
    1))} \binom{- k - 1}{n - k}_q\\
    & = & (- 1)^{n - k} \operatorname{sgn} (n - k) q^{\frac{1}{2} (n (n + 1) - k (k +
    1))} \binom{- k - 1}{- n - 1}_q\\
    & = & (- 1)^k \operatorname{sgn} (k) q^{\frac{1}{2} k (2 n - k + 1)} \binom{k - n
    - 1}{- n - 1}_q\\
    & = & (- 1)^k \operatorname{sgn} (k) q^{\frac{1}{2} k (2 n - k + 1)} \binom{k - n
    - 1}{k}_q .
  \end{eqnarray*}
\end{corollary}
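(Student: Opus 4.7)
The plan is to view the basic symmetry $S\colon(n,k)\mapsto(n,n-k)$ from Lemma~\ref{lem:qbin:basic}\ref{i:qbin:sym} and the negation $R\colon(n,k)\mapsto(k-n-1,k)$ from Theorem~\ref{thm:qbin:neg} as two involutions acting on integer pairs. A direct check shows that $SR$ has order three, so $\langle S,R\rangle\cong S_3$ and the orbit of $(n,k)$ under this action contains exactly six pairs. The six formulas listed in the corollary are precisely the images of $\binom{n}{k}_q$ under these six group elements, each carrying the appropriate sign and $q$-power prefactor.

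To produce the explicit list, I would apply the alternating sequence $S,R,S,R,S$ to $\binom{n}{k}_q$ and read off the resulting expressions. The first step yields line one. Applying $R$ to $\binom{n}{n-k}_q$ via Theorem~\ref{thm:qbin:neg} (with the roles $n\to n$, $k\to n-k$) introduces the prefactor $(-1)^{n-k}\operatorname{sgn}(n-k)\,q^{\frac{1}{2}(n-k)(n+k+1)}$, and the identity $\tfrac12(n-k)(n+k+1)=\tfrac12(n(n+1)-k(k+1))$ recasts the exponent in the form displayed on line two. A further application of $S$ only rewrites $n-k$ in the lower index as $-n-1$, giving line three without affecting the prefactor. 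The remaining two steps proceed in the same way: each application of $R$ contributes a fresh sign and $q$-power factor while each application of $S$ merely reshuffles the lower index. After simplifying the accumulated exponents of $q$ via elementary quadratic identities one recovers lines four and five.

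The main obstacle is the bookkeeping of the $\operatorname{sgn}$ factors. Pairs of sign functions accumulated at successive applications of $R$ must collapse via the identity $\operatorname{sgn}(a)\operatorname{sgn}(b)=\operatorname{sgn}(ab)$, which is only valid when the $q$-binomial coefficients involved are nonzero, exactly the caveat that arises at the end of the proof of Theorem~\ref{thm:qbin:neg}. Since this portion of the argument is formally identical to Sprugnoli's analysis of the $S_3$-action in the case $q=1$ \cite[Theorem~3.2]{sprugnoli-binom}, a cleaner strategy is to invoke his framework verbatim and verify only that the $q$-power prefactors transform compatibly under $S$ and $R$, as the remark preceding the corollary essentially suggests; this reduces the work to checking a single cocycle-type relation rather than processing all six forms one at a time.
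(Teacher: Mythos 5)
Your proposal is correct and follows essentially the same route as the paper: the six forms are obtained by alternately applying the symmetry of Lemma~\ref{lem:qbin:basic}\ref{i:qbin:sym} and the negation rule \eqref{eq:qbin:neg} in the order $S,R,S,R,S$, with the sign bookkeeping handled by the identity $-\operatorname{sgn}(n-k)\operatorname{sgn}(-n-1)=\operatorname{sgn}(k)$ (valid when the coefficients are nonzero), exactly as in Sprugnoli's $q=1$ analysis that the paper also cites.
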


\begin{proof}
  These equalities follow from alternately applying the basic symmetry from
  Lemma~\ref{lem:qbin:basic}\ref{i:qbin:sym} and the negation rule
  \eqref{eq:qbin:neg}. Moreover, for the fourth equality, we use that
  \begin{equation*}
    - \operatorname{sgn} (n - k) \operatorname{sgn} (- n - 1) = \operatorname{sgn} (k)
  \end{equation*}
  whenever the involved $q$-binomial coefficients are different from zero
  (again, see \cite[Theorem~2.2]{sprugnoli-binom} for more details on this
  argument).
\end{proof}

It follows directly from the definition \eqref{eq:qbin:def:pochlim} that the
$q$-binomial coefficient $\binom{n}{k}_q$ is zero if $k > n \geq 0$ or if
$n \geq 0 > k$. The third equality in Corollary~\ref{cor:qbin:forms} then
makes it plainly visible that the $q$-binomial coefficient also vanishes if $0
> k > n$. Moreover, we can read off from Corollary~\ref{cor:qbin:forms} that
the $q$-binomial coefficient is nonzero otherwise; that is, it is nonzero
precisely in the three regions $0 \leq k \leq n$ (the classical
case), $n < 0 \leq k$ and $k \leq n < 0$. More precisely, we have
the following, of which the first statement is, of course, well-known (see,
for instance, \cite[Corollary~6.1]{kc-q}).

\begin{corollary}
  \label{cor:qbin:deg}\leavevmode

  \begin{enumerate}
    \item If $0 \leq k \leq n$, then $\binom{n}{k}_q$ is a
    polynomial of degree $k (n - k)$.
    
    \item If $n < 0 \leq k$, then $\binom{n}{k}_q$ is $q^{\frac{1}{2} k
    (2 n - k + 1)}$ times a polynomial of degree $k (- n - 1)$.
    
    \item If $k \leq n < 0$, then $\binom{n}{k}_q$ is $q^{\frac{1}{2} (n
    (n + 1) - k (k + 1))}$ times a polynomial of degree $(- n - 1) (n - k)$.
  \end{enumerate}
  In each case, the polynomials are self-reciprocal and have integer
  coefficients.
\end{corollary}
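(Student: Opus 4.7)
The plan is to reduce all three cases to the classical case (a), which is standard, by applying the reflection identities already collected in Corollary~\ref{cor:qbin:forms}. Once the reduction is done, the degree is read off from the classical degree formula $k(n-k)$, and the remaining assertions (integer coefficients, self-reciprocality) are inherited from the classical case via the prefactor being a monomial in $q$.

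In more detail, for case (b), where $n<0\le k$, I would apply the negation identity from Lemma~\ref{lem:qbin:basic}\ref{i:qbin:neg} (equivalently, the last line of Corollary~\ref{cor:qbin:forms}):
\[
  \binom{n}{k}_q = (-1)^k \operatorname{sgn}(k)\, q^{\frac{1}{2}k(2n-k+1)} \binom{k-n-1}{k}_q.
\]
Under the hypothesis $n<0\le k$ we have $k-n-1\ge k\ge 0$, so the right-hand $q$-binomial lies in the classical range of case (a) and is thus a polynomial of degree $k((k-n-1)-k)=k(-n-1)$ with integer coefficients. The overall sign $(-1)^k \operatorname{sgn}(k)=(-1)^k$ is absorbed into the polynomial (keeping integer coefficients), and the prefactor $q^{\frac{1}{2}k(2n-k+1)}$ is a monomial, giving the desired form.

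For case (c), where $k\le n<0$, I would apply the third equality of Corollary~\ref{cor:qbin:forms},
\[
  \binom{n}{k}_q = (-1)^{n-k}\operatorname{sgn}(n-k)\, q^{\frac{1}{2}(n(n+1)-k(k+1))} \binom{-k-1}{-n-1}_q.
\]
Under $k\le n<0$ we get $-k-1\ge -n-1\ge 0$, so this again falls into case (a) and yields a polynomial of degree $(-n-1)((-k-1)-(-n-1))=(-n-1)(n-k)$ with integer coefficients. Self-reciprocality in each case follows from the classical self-reciprocality together with Lemma~\ref{lem:qbin:basic}\ref{i:qbin:qinv}: a monomial factor does not affect whether the remaining polynomial is self-reciprocal.

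The only mildly subtle point, which I would handle once and invoke twice, is the sign bookkeeping: we need $\operatorname{sgn}(k)=+1$ in case (b) and $\operatorname{sgn}(n-k)=+1$ in case (c), both of which are automatic from the hypotheses, so the $\operatorname{sgn}$ factors drop out cleanly and only the $(-1)^{\cdot}$ sign (which is absorbed into the integer polynomial) remains. There is no real obstacle; the main thing to be careful about is choosing the \emph{correct} one of the six forms in Corollary~\ref{cor:qbin:forms} in each of cases (b) and (c) so that the resulting $q$-binomial has both entries nonnegative, and this is precisely what the case analysis is designed to do.
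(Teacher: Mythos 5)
Your proposal is correct and matches the paper's (implicit) argument: the corollary is stated as a direct consequence of Corollary~\ref{cor:qbin:forms}, reducing cases (b) and (c) to the classical region $0 \leq k \leq n$ exactly as you do, with the $\operatorname{sgn}$ factors trivially equal to $+1$ and the $(-1)^{\cdot}$ sign absorbed into the (still self-reciprocal, integer-coefficient) polynomial.
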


Observe that Corollary~\ref{cor:qbin:forms} together with the defining product
\eqref{eq:qbin:def:poch}, spelled out as
\begin{equation*}
  \binom{n}{k}_q = \frac{(1 - q^{k + 1}) (1 - q^{k + 2}) \cdots (1 - q^n)}{(1
   - q) (1 - q^2) \cdots (1 - q^{n - k})}
\end{equation*}
and valid when $0 \leq k \leq n$, provides explicit product formulas
for all choices of $n$ and $k$. Indeed, the three regions in which the
binomial coefficients are nonzero are $0 \leq k \leq n$, $n < 0
\leq k$ and $k \leq n < 0$, and these three are permuted by the
transformations in Corollary~\ref{cor:qbin:forms}.

\section{Combinatorial interpretation}\label{sec:combinatorial}

For integers $n, k \geq 0$, the binomial coefficient $\binom{n}{k}$
counts the number of $k$-element subsets of a set with $n$ elements. It is a
remarkable and beautiful observation of Loeb \cite{loeb-neg} that this
interpretation (up to an overall sign) can be extended to all integers $n$ and
$k$ by a natural notion of sets with a negative number of elements. In this
section, after briefly reviewing Loeb's result, we generalize this
combinatorial interpretation to the case of $q$-binomial coefficients.

Let $U$ be a collection of elements (the ``universe''). A set $X$ with
elements from $U$ can be thought of as a map $M_X : U \rightarrow \{ 0, 1 \}$
with the understanding that $u \in X$ if and only if $M_X (u) = 1$. Similarly,
a multiset $X$ can be thought of as a map $M_X : U \rightarrow \{ 0, 1, 2,
\ldots \}$, in which case $M_X (u)$ is the multiplicity of an element $u$. In
this spirit, Loeb introduces a {\emph{hybrid set}} $X$ as a map $M_X : U
\rightarrow \mathbb{Z}$. We will denote hybrid sets in the form $\{ \ldots |
\ldots \}$, where elements with a positive multiplicity are listed before the
bar, and elements with a negative multiplicity after the bar.

\begin{example}
  The hybrid set $\{ 1, 1, 4|2, 3, 3 \}$ contains the elements $1, 2, 3, 4$
  with multiplicities $2, - 1, - 2, 1$, respectively.
\end{example}

A hybrid set $Y$ is a subset of a hybrid set $X$, if one can repeatedly remove
elements from $X$ (here, removing means decreasing by one the multiplicity of
an element with nonzero multiplicity) and thus obtain $Y$ or have removed $Y$.
We refer to \cite{loeb-neg} for a more formal definition and further
discussion, including a proof that this notion of being a subset is a
well-defined partial order (but not a lattice).

\begin{example}
  From the hybrid set $\{ 1, 1, 4|2, 3, 3 \}$ we can remove the element $4$ to
  obtain $\{ 1, 1|2, 3, 3 \}$ (at which point, we cannot remove $4$ again). We
  can further remove $2$ twice to obtain $\{ 1, 1|2, 2, 2, 3, 3 \}$.
  Consequently, $\{ 4| \}$ and $\{ 1, 1|2, 3, 3 \}$ as well as $\{ 2, 2, 4|
  \}$ and $\{ 1, 1|2, 2, 2, 3, 3 \}$ are subsets of $\{ 1, 1, 4|2, 3, 3 \}$.
\end{example}

Following \cite{loeb-neg}, a {\emph{new set}} is a hybrid set such that
either all multiplicities are $0$ or $1$ (a ``positive set'') or all
multiplicities are $0$ or $- 1$ (a ``negative set'').

\begin{theorem}[\cite{loeb-neg}]
  \label{thm:bin:subset}For all integers $n$ and $k$, the number of
  $k$-element subsets of an $n$-element new set is $\left| \binom{n}{k}
  \right|$.
\end{theorem}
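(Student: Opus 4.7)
The plan is to analyze subsets via an explicit parametrization of valid removal sequences, then case on the sign of $n$. Any valid sequence of removals from $X$ is encoded by a vector $R : U \to \mathbb{Z}_{\geq 0}$ supported on $\operatorname{supp}(X)$, with $R(u)$ counting how many times $u$ is decremented. The rule ``decrease the multiplicity of an element with nonzero multiplicity'' forces $R(u) \in \{0,1\}$ when $M_X(u) = 1$ (the positive case) and places no upper bound on $R(u)$ when $M_X(u) = -1$ (the negative case). Per Loeb's definition, the $Y$'s counted as subsets of $X$ are either the \emph{remainder} $X - R$ (type (a)) or the \emph{removal record} $R$ itself (type (b)).

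When $n \geq 0$, both constructions produce positive hybrid sets supported on $\operatorname{supp}(X)$; every $k$-subset $Y$ of the $n$-element support arises as a type-(b) subset via $R = Y$ (and as type-(a) via its complement), and distinct subsets of $\operatorname{supp}(X)$ give distinct hybrid sets. The number of $k$-element subsets is thus the classical $\binom{n}{k}$ if $0 \leq k \leq n$ and $0$ otherwise, which equals $\bigl|\binom{n}{k}\bigr|$ in this regime.

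When $n < 0$, type-(a) subsets have multiplicity $\leq -1$ on all of $\operatorname{supp}(X)$, whereas type-(b) subsets have every multiplicity $\geq 0$. The two families are therefore disjoint as $R$ varies (they even agree only trivially at $R = 0$, where (a) yields $Y = X$ of size $n$ and (b) yields $Y = \emptyset$ of size $0$, which are distinct unless $n = 0$). The size of a type-(a) subset is $n - \sum_u R(u)$ and of a type-(b) subset is $\sum_u R(u)$, so I split into three regimes: for $k \geq 0$, only type (b) contributes and the count is the number of weak compositions of $k$ into $|n|$ parts, namely $\binom{|n|+k-1}{k}$; for $k \leq n$ (so $k < 0$), only type (a) contributes, with count $\binom{|n|+(n-k)-1}{n-k} = \binom{-k-1}{n-k}$; and for $n < k < 0$, neither type can produce a subset of size $k$, giving $0$.

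To finish, I match these three counts against $\bigl|\binom{n}{k}\bigr|$ using the negation formula $\binom{n}{k} = (-1)^k \operatorname{sgn}(k) \binom{k-n-1}{k}$ (the $q = 1$ case of Theorem~\ref{thm:qbin:neg}), the symmetry $\binom{n}{k} = \binom{n}{n-k}$, and the vanishing of $\binom{n}{k}$ for $n < k < 0$ (the $q=1$ case of Corollary~\ref{cor:qbin:deg}). The main obstacle is not any particular computation but the bookkeeping needed to interpret Loeb's disjunction ``obtain $Y$ or have removed $Y$'' unambiguously, and to verify that the removal-vector parametrization is injective on each type; once that is in place, the counts reduce to the classical multiset identity $\binom{m+k-1}{k}$ and the standard negation rule for binomial coefficients.
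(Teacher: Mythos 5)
Your argument is correct, and it takes a genuinely different route from the paper. The paper does not actually prove Theorem~\ref{thm:bin:subset} at all: it cites the result from Loeb and then recovers it as the $q=1$ specialization of Theorem~\ref{thm:qbin:subset}, whose proof is an induction on $n$ and $k$ driven by the Pascal relations \eqref{eq:qbin:pascal} and \eqref{eq:qbin:pascal2}, after first using the reflection formula and Corollary~\ref{cor:qbin:forms} to rewrite the claim as the two identities \eqref{eq:qbin:subset:2} and \eqref{eq:qbin:subset:3} about classical binomial coefficients with nonnegative upper entry. You instead enumerate the subsets directly: you parametrize valid removal sequences by a vector $R$ with $R(u)\in\{0,1\}$ on positive elements and $R(u)\ge 0$ unconstrained on negative elements, split Loeb's disjunction into the ``remainder'' and ``removal record'' families, observe that the size constraint forces exactly one family to contribute in each of the regimes $k\ge 0$, $k\le n<0$, $n<k<0$, and reduce the count to the stars-and-bars identity $\binom{|n|+k-1}{k}$, matched to $\bigl|\binom{n}{k}\bigr|$ via negation and symmetry. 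This is essentially Loeb's own combinatorial viewpoint, and it buys a self-contained, non-inductive proof that makes the appearance of the multiset coefficient transparent (the paper itself remarks on this connection only informally, after Theorem~\ref{thm:qbin:subset}); the paper's induction, by contrast, generalizes immediately to the $q$-weighted statement, where your direct count would need the extra bookkeeping of the statistic $\sigma(Y)$. Two points you should make explicit if you write this up: (i) that every admissible vector $R$ is realized by some removal order and that the resulting pair $(X-R,\,R)$ depends only on $R$, not on the order, so the parametrization is both surjective and well defined; and (ii) the verification that $\binom{-k-1}{n-k}$ is the correct absolute value in the regime $k\le n<0$, which requires applying the negation rule to $\binom{n}{n-k}$ (where the sign factor $\operatorname{sgn}(n-k)$ is $+1$) rather than to $\binom{n}{k}$ directly. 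Neither is a gap, just the bookkeeping you already flagged.
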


\begin{example}
  \label{eg:bin:subsets}Consider the new set $\{ | - 1, - 2, - 3 \}$ with $-
  3$ elements (the reason for choosing the elements to be negative numbers
  will become apparent when we revisit this example in
  Example~\ref{eg:qbin:subsets}). Its $2$-element subsets are
  \begin{equation*}
    \{ - 1, - 1| \}, \quad \{ - 1, - 2| \}, \quad \{ - 1, - 3| \}, \quad \{ -
     2, - 2| \}, \quad \{ - 2, - 3| \}, \quad \{ - 3, - 3| \},
  \end{equation*}
  so that $\left| \binom{- 3}{2} \right| = 6$. On the other hand, its $-
  4$-element subsets are
  \begin{equation*}
    \{ | - 1, - 1, - 2, - 3 \}, \quad \{ | - 1, - 2, - 2, - 3 \}, \quad \{ |
     - 1, - 2, - 3, - 3 \},
  \end{equation*}
  so that $\left| \binom{- 3}{- 4} \right| = 3$.
\end{example}

Let $X_n$ denote the standard new set with $n$ elements, by which we mean $X_n
= \{ 0, 1, \ldots, n - 1| \}$, if $n \geq 0$, and $X_n = \{ | - 1, - 2,
\ldots, n \}$, if $n < 0$. For a hybrid set $Y \subseteq X_n$ with
multiplicity function $M_Y$, we write
\begin{equation*}
  \sigma (Y) = \sum_{y \in Y} M_Y (y) y.
\end{equation*}
Note that, if $Y$ is a classic set, then $\sigma (Y)$ is just the sum of its
elements. With this setup, we prove the following uniform generalization of
\cite[Theorem~5.2]{loeb-neg}, which is well-known in the case that $n, k
\geq 0$ (see, for instance, \cite[Theorem~6.1]{kc-q}).

\begin{theorem}
  \label{thm:qbin:subset}For all integers $n$ and $k$,
  \begin{equation}
    \binom{n}{k}_q = \varepsilon \sum_Y q^{\sigma (Y) - k (k - 1) / 2}, \quad
    \varepsilon = \pm 1, \label{eq:qbin:subset}
  \end{equation}
  where the sum is over all $k$-element subsets $Y$ of the $n$-element set
  $X_n$. If $0 \leq k \leq n$, then $\varepsilon = 1$. If $n < 0
  \leq k$, then $\varepsilon = (- 1)^k$. If $k \leq n < 0$, then
  $\varepsilon = (- 1)^{n - k}$.
\end{theorem}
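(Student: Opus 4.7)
The plan is to split into cases by the sign pattern of $(n,k)$ and reduce the two ``negative'' regions to the classical case $0 \leq k \leq n$ via the reflection formulas from Section~\ref{sec:reflect}, together with explicit bijections between $k$-subsets of $X_n$ and subsets of an appropriate positive $X_{n'}$. The classical case is the familiar combinatorial interpretation of $\binom{n}{k}_q$ (cited in the statement). In the remaining regions ($k > n \geq 0$, $n \geq 0 > k$, and $0 > k > n$), both sides of \eqref{eq:qbin:subset} vanish: the $q$-binomial coefficient by the remark following Corollary~\ref{cor:qbin:forms}, and the sum because $X_n$ has no $k$-element subsets there (a positive new set of size $n$ has no subsets of size $>n$ or of negative size, while a negative new set $X_n$ admits only obtained subsets of size $\leq n$ and removed subsets of size $\geq 0$).

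For the region $n < 0 \leq k$, I would apply Theorem~\ref{thm:qbin:neg} to rewrite
$$\binom{n}{k}_q = (-1)^k q^{\frac{1}{2}k(2n-k+1)} \binom{k-n-1}{k}_q,$$
noting $k-n-1 \geq k \geq 0$. A $k$-element subset $Y$ of $X_n = \{|-1,\ldots,n\}$ is a multiset $\{y_1 \leq \cdots \leq y_k\}$ with $y_i \in \{n,\ldots,-1\}$, and the standard bijection $z_i := y_i + i - 1 - n$ carries it to a $k$-subset $Z = \{z_1 < \cdots < z_k\}$ of $\{0,1,\ldots,k-n-2\}$, satisfying $\sigma(Z) = \sigma(Y) + \binom{k}{2} - nk$. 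Combined with the classical case applied to $\binom{k-n-1}{k}_q$ and the identity $nk - \binom{k}{2} = \tfrac{1}{2}k(2n-k+1)$, this gives the claim with $\varepsilon = (-1)^k$.

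For the region $k \leq n < 0$, I would use
$$\binom{n}{k}_q = (-1)^{n-k} q^{\frac{1}{2}(n(n+1)-k(k+1))} \binom{-k-1}{n-k}_q$$
from Corollary~\ref{cor:qbin:forms}, for which $0 \leq n-k \leq -k-1$. Parametrize the multiplicities of $Y$ on $X_n$ by $m_{-j} = -1 - b_j$ for $j = 1,\ldots,-n$; the tuple $(b_j)$ is a composition of $n-k$ into $-n$ nonnegative parts, and the associated multiset on $\{1,\ldots,-n\}$ is carried by the analogous bijection $j_i \mapsto j_i + i - 2$ to an $(n-k)$-subset $Z$ of $\{0,1,\ldots,-k-2\}$. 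A direct computation yields the relation between $\sigma(Y)$ and $\sigma(Z)$ that, combined with the classical case applied to $\binom{-k-1}{n-k}_q$ and the reflection factor, produces $\varepsilon = (-1)^{n-k}$. The main obstacle throughout is the bookkeeping of $q$-exponents in these last two cases: contributions from the reflection formulas, the $-k(k-1)/2$ normalization in the statement, the $-\binom{k'}{2}$ normalization in the classical case, and the multiset-to-subset bijection all have to line up precisely, even though each individual piece is routine arithmetic.
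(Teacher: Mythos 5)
Your proposal is correct, and its overall architecture (classical case, vanishing regions, and reduction of the two negative regions via the reflection formulas of Section~\ref{sec:reflect}) matches the paper's. The difference lies in how the two reflected identities are established: the paper proves
\begin{equation*}
  \binom{k - n - 1}{k}_q = \sum_{Y} q^{\sigma (Y)} \qquad \text{and} \qquad
  \binom{- k - 1}{- n - 1}_q = \sum_{Y} q^{\sigma (Y) + k - n (n + 1) / 2}
\end{equation*}
by induction on $(n, k)$, peeling off the element of extremal multiplicity and invoking Pascal's relation in the forms \eqref{eq:qbin:pascal2} and \eqref{eq:qbin:pascal}, whereas you instead exhibit the explicit stars-and-bars bijection $y_i \mapsto y_i + i - 1 - n$ (and its analogue for the doubly negative region) from $k$-element hybrid subsets of $X_n$ to honest subsets of a positive standard set, and then quote the classical case. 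I checked your exponent bookkeeping: in the first region $\sigma(Z) = \sigma(Y) + \binom{k}{2} - nk$ combines with $\tfrac{1}{2}k(2n - k + 1) = nk - \binom{k}{2}$ to give exactly $\sigma(Y) - k(k-1)/2$ with sign $(-1)^k$, and in the second region the analogous computation (using $\sigma(Y) = \tfrac{1}{2}n(n-1) + \sum_i j_i$ and $\sigma(Z) = \sum_i j_i + \tfrac{1}{2}(n-k)(n-k-3)$) collapses to $\sigma(Y) - k(k-1)/2$ as well, so the claim you leave as ``a direct computation'' does go through. Your bijective route is arguably more transparent combinatorially --- it explains directly why the reflected coefficient counts $k$-multisets of an $|n|$-element set with the right $q$-weights --- at the price of having to track how the bijection shifts $\sigma$; the paper's inductive route avoids that bookkeeping and stays uniform with its Pascal-relation machinery, but is less explicit about where the multiset structure comes from.
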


\begin{proof}
  The case $n, k \geq 0$ is well-known. A proof can be found, for
  instance, in \cite[Theorem~6.1]{kc-q}. On the other hand, if $n \geq
  0 > k$, then both sides vanish.
  
  Let us consider the case $n < 0 \leq k$. It follows from the reflection
  formula \eqref{eq:qbin:neg} that \eqref{eq:qbin:subset} is equivalent to the
  (arguably cleaner, but less uniform because restricted to $n < 0 \leq
  k$) identity
  \begin{equation}
    \binom{k - n - 1}{k}_q = \sum_{Y \in C (n, k)} q^{\sigma (Y)},
    \label{eq:qbin:subset:2}
  \end{equation}
  where $C (n, k)$ is the collection of $k$-element subsets of the $n$-element
  set $X_n^+ = \{ |0, 1, 2, \ldots, | n | - 1 \}$ (note that a natural
  bijection $X_n \rightarrow X_n^+$ is given by $x \mapsto | n | + x$).
  
  Fix $n, k$ and suppose that \eqref{eq:qbin:subset:2} holds whenever $n$ and
  $k$ are replaced with $n'$ and $k'$ such that $n < n' < 0$ or $n = n' < 0
  \leq k' < k$. Then,
  \begin{eqnarray*}
    \sum_{Y \in C (n, k)} q^{\sigma (Y)} & = &
    \sum_{\substack{
      Y \in C (n, k)\\
      - n - 1 \not\in Y
    }} q^{\sigma (Y)} + \sum_{\substack{
      Y \in C (n, k)\\
      - n - 1 \in Y
    }} q^{\sigma (Y)}\\
    & = & \sum_{Y \in C (n + 1, k)} q^{\sigma (Y)} + \sum_{Y \in C (n, k -
    1)} q^{\sigma (Y) - n - 1}\\
    & = & \binom{k - n - 2}{k}_q + q^{- n - 1} \binom{k - n - 2}{k - 1}_q\\
    & = & \binom{k - n - 1}{k}_q,
  \end{eqnarray*}
  where the last equality follows from Pascal's relation in the form
  \eqref{eq:qbin:pascal2}. Since \eqref{eq:qbin:subset:2} holds trivially if
  $n = - 1$ or if $k = 0$, it therefore follows by induction that
  \eqref{eq:qbin:subset:2} is true whenever $n < 0 \leq k$.
  
  Finally, consider the case $n, k < 0$. It is clear that both sides vanish
  unless $k \leq n < 0$. By the third equality in
  Corollary~\ref{cor:qbin:forms},
  \begin{equation*}
    \binom{n}{k}_q = (- 1)^{n - k} q^{\frac{1}{2} (n (n + 1) - k (k + 1))}
     \binom{- k - 1}{- n - 1}_q,
  \end{equation*}
  so that \eqref{eq:qbin:subset} becomes equivalent to
  \begin{equation}
    \binom{- k - 1}{- n - 1}_q = \sum_{Y \in D (n, k)} q^{\sigma (Y) + k - n
    (n + 1) / 2}, \label{eq:qbin:subset:3}
  \end{equation}
  where $D (n, k)$ is the collection of $k$-element subsets $Y$ of the
  $n$-element set $X_n = \{ | - 1, - 2, \ldots, n \}$. If $n = - 1$, then
  \eqref{eq:qbin:subset:3} holds because the only contribution comes from $Y =
  \{ | - 1, - 1, \ldots, - 1 \}$, with $M_Y (- 1) = | k |$ and $\sigma (Y) = -
  k$. If, on the other hand, $k = - 1$, then \eqref{eq:qbin:subset:3} holds as
  well because a contributing $Y$ only exists if $n = - 1$. Fix $n, k < - 1$
  and suppose that \eqref{eq:qbin:subset:3} holds whenever $n$ and $k$ are
  replaced with $n'$ and $k'$ such that $k < k' < 0$ and $n \leq n' < 0$.
  Then the right-hand side of \eqref{eq:qbin:subset:3} equals
  \begin{equation*}
    \sum_{\substack{
       Y \in D (n, k)\\
       M_Y (n) = - 1
     }} q^{\sigma (Y) + k - n (n + 1) / 2} +
     \sum_{\substack{
       Y \in D (n, k)\\
       M_Y (n) < - 1
     }} q^{\sigma (Y) + k - n (n + 1) / 2} .
  \end{equation*}
  We now remove the element $n$ from $Y$ (once) and, to make up for that,
  replace $\sigma (Y)$ with $\sigma (Y) - n$. Proceeding thus, we see that the
  right-hand side of \eqref{eq:qbin:subset:3} equals
  \begin{eqnarray*}
    &  & \sum_{Y \in D (n + 1, k + 1)} q^{\sigma (Y) + k + 1 - (n + 1) (n +
    2) / 2} + q^{- n - 1} \sum_{Y \in D (n, k + 1)} q^{\sigma (Y) + k + 1 - n
    (n + 1) / 2}\\
    & = & \binom{- k - 2}{- n - 2}_q + q^{- n - 1} \binom{- k - 2}{- n - 1}_q
    = \binom{- k - 1}{- n - 1}_q,
  \end{eqnarray*}
  with the final equality following from Pascal's relation
  \eqref{eq:qbin:pascal}. We conclude, by induction, that
  \eqref{eq:qbin:subset:3} is true for all $n, k < 0$.
\end{proof}

\begin{remark}
  The number of possibilities to choose $k$ elements from a set of $n$
  elements with replacement is
  \begin{equation*}
    \binom{k + n - 1}{k} = \binom{k + n - 1}{n - 1} .
  \end{equation*}
  The usual ``trick'' to arrive at this count is to encode each choice of $k$
  elements by lining them up in some order with elements of the same kind
  separated by dividers (since there are $n$ kinds of elements, we need $n -
  1$ dividers). The $n - 1$ positions of the dividers among all $k + n - 1$
  positions then encode a choice of $k$ elements. Formula
  \eqref{eq:qbin:subset:2} is a $q$-analog of this combinatorial fact.
\end{remark}

\begin{example}
  \label{eg:qbin:subsets}Let us revisit and refine
  Example~\ref{eg:bin:subsets}, which concerns subsets of $X_{- 3} = \{ | - 1,
  - 2, - 3 \}$. Letting $k = 2$, the $2$-element subsets have element-sums
  $\sigma (\{ - 1, - 1| \}) = - 2$, $\sigma (\{ - 1, - 2| \}) = - 3$, $\sigma
  (\{ - 1, - 3| \}) = - 4$, $\sigma (\{ - 2, - 2| \}) = - 4$, $\sigma (\{ - 2,
  - 3| \}) = - 5$, $\sigma (\{ - 3, - 3| \}) = - 6$. Subtracting $k (k - 1) /
  2 = 1$ from these sums to obtain the weight, we find
  \begin{equation*}
    \binom{- 3}{2}_q = q^{- 3} + q^{- 4} + 2 q^{- 5} + q^{- 6} + q^{- 7} .
  \end{equation*}
  Next, let us consider the case $k = - 4$. The $- 4$-element subsets have
  element-sums
  \begin{equation*}
    \sigma (\{ | - 1, - 1, - 2, - 3 \}) = 7, \quad \sigma (\{ | - 1, - 2, -
     2, - 3 \}) = 8, \quad \sigma (\{ | - 1, - 2, - 3, - 3 \}) = 9.
  \end{equation*}
  Subtracting $k (k - 1) / 2 = 10$ from these sums and noting that $(- 1)^{n -
  k} = - 1$, we conclude that
  \begin{equation*}
    \binom{- 3}{- 4}_q = - (q^{- 3} + q^{- 2} + q^{- 1}) .
  \end{equation*}
\end{example}

In the remainder of this section, we consider two applications of
Theorem~\ref{thm:qbin:subset}. The first of these is the following extension
of the classical Chu-Vandermonde identity.

\begin{lemma}
  For all integers $n, m$ and $k$, with $k \geq 0$,
  \begin{equation}
    \sum_{j = 0}^k q^{(k - j) (n - j)} \binom{n}{j}_q \binom{m}{k - j}_q =
    \binom{n + m}{k}_q . \label{eq:chuvandermonde:pos}
  \end{equation}
\end{lemma}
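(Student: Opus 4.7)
The plan is to prove \eqref{eq:chuvandermonde:pos} by induction on $n$, with $m$ and $k \geq 0$ held fixed. Writing $L(n, k)$ for the left-hand side and $R(n, k) = \binom{n + m}{k}_q$ for the right-hand side, I aim to show that $L$ and $R$ satisfy the same Pascal-type recursion in $n$ and agree along the initial line $n = 0$. The base cases $L(n, 0) = R(n, 0) = 1$ and $L(0, k) = R(0, k) = \binom{m}{k}_q$ are immediate, the latter from the identity $\binom{0}{j}_q = \delta_{j, 0}$ --- which follows from \eqref{eq:qbin:def:pochlim} for $j \geq 1$ (the $q$-Pochhammer $(q; q)_{-j}$ diverges, so the reciprocal vanishes) and from Lemma~\ref{lem:qbin:basic}\ref{i:qbin:sym} for $j < 0$ --- collapsing the sum to its single $j = 0$ term.

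For the inductive step I would apply Pascal's relation \eqref{eq:qbin:pascal} term-by-term to $\binom{n}{j}_q$ inside the sum, an operation valid for every $j$ in the summation range as long as $n \neq 0$, so that the excluded case $(n, j) = (0, 0)$ never arises. Splitting the resulting expression into two pieces, reindexing $j \mapsto j + 1$ in the first, and using the algebraic identity $(k - j)(n - j) + j = (k - j)(n - 1 - j) + k$ in the second, one obtains
\begin{equation*}
  L(n, k) = L(n - 1, k - 1) + q^k L(n - 1, k), \qquad n \neq 0.
\end{equation*}
The would-be boundary term $\binom{n - 1}{-1}_q$ at $j = 0$ of the reindexed first sum vanishes, since by the nonvanishing description in Corollary~\ref{cor:qbin:deg} this coefficient is nonzero only when $n = 0$. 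As $R(n, k)$ satisfies the identical recursion via Pascal applied to $\binom{n + m}{k}_q$ (valid for $k \geq 1$; the case $k = 0$ is in the base), ascending induction on $n$ from $n = 0$ establishes the identity for all $n \geq 0$; descending induction --- solving the same recursion for $L(n - 1, k)$ in terms of $L(n, k)$ and $L(n - 1, k - 1)$ and pairing it with an auxiliary induction on $k$ --- handles all $n \leq -2$.

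The main obstacle is the single residual case $n = -1$, which descending induction from $n = 0$ cannot reach without invoking Pascal at the excluded point $(n, j) = (0, 0)$. I would dispatch this either by verifying directly that the recursion $L(0, k) = L(-1, k - 1) + q^k L(-1, k)$ does hold despite the Pascal exception --- a short calculation shows that the $\binom{-1}{-1}_q = 1$ contribution picked up at $j = 0$ after reindexing the first sum exactly compensates for the $+1$ discrepancy of Pascal at $(0, 0)$ --- or by computing $L(-1, k)$ in closed form using $\binom{-1}{j}_q = (-1)^j q^{-j(j + 1)/2}$ for $j \geq 0$ from the example following Lemma~\ref{lem:qbin:basic} and confirming directly that $L(-1, k) = \binom{m - 1}{k}_q$.
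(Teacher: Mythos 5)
Your proof is correct, but it takes a genuinely different route from the paper. The paper deduces \eqref{eq:chuvandermonde:pos} from the combinatorial interpretation of Theorem~\ref{thm:qbin:subset}: a $k$-element subset $Y$ of $X_{n+m}$ is split as $Y_1 \cup Y_2'$ with $Y_1$ a $j$-element subset of $X_n$ and $Y_2'$ a shifted copy of a $(k-j)$-element subset of $X_m$, and the weight $\sigma(Y) - k(k-1)/2$ is shown to decompose into the two factors' weights plus the correction $(k-j)(n-j)$. That argument is short and explains where the power of $q$ comes from, but as written it is carried out only for $n, m \geq 0$; the remaining sign patterns are not spelled out, even though the lemma is later invoked with $m = -1$ and $n < 0$. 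Your double induction --- deriving the recursion $L(n,k) = L(n-1,k-1) + q^k L(n-1,k)$ by applying \eqref{eq:qbin:pascal} termwise to $\binom{n}{j}_q$, matching it against the same recursion for $\binom{n+m}{k}_q$, and running ascending and descending induction anchored at $n = 0$ --- treats all integers $n$ and $m$ uniformly, since $m$ enters only as a parameter and Pascal's relation for $\binom{n+m}{k}_q$ is only ever needed with $k \geq 1$. You also correctly isolate the one delicate point, the passage from $n = 0$ to $n = -1$, where Pascal's relation is unavailable at $(n,j) = (0,0)$; your resolution checks out, since the boundary term $q^{k\cdot 0}\binom{-1}{-1}_q\binom{m}{k}_q = \binom{m}{k}_q$ produced by the shift $j \mapsto j+1$ exactly offsets the discrepancy $\binom{-1}{-1}_q + \binom{-1}{0}_q - \binom{0}{0}_q = 1$ multiplying $\binom{m}{k}_q$ in the $j = 0$ term. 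The exponent identity $(k-j)(n-j) + j = (k-j)(n-1-j) + k$ and the vanishing of $\binom{n-1}{-1}_q$ for $n \neq 0$ are likewise correct. In short: the paper's proof is more conceptual but incomplete in the negative cases as printed, while yours is a longer but fully general verification.
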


\begin{proof}
  Throughout this proof, if $Y$ is a $k$-element set, write $\tau (Y) = \sigma
  (Y) - k (k - 1) / 2$.
  
  Suppose $n, m \geq 0$. Let $Y_1$ be a $j$-element subset of $X_n$, and
  $Y_2$ a $(k - j)$-element subset of $X_m$. Let $Y_2' = \{ y + n : y \in Y_2
  \}$, so that $Y = Y_1 \cup Y_2'$ is a $k$-element subset of $X_{n + m}$.
  Then, since
  \begin{equation*}
    \sigma (Y) = \sigma (Y_1) + \sigma (Y_2') = \sigma (Y_1) + \sigma (Y_2) +
     (k - j) n,
  \end{equation*}
  we have
  \begin{equation*}
    \tau (Y) = \tau (Y_1) + \tau (Y_2) + (k - j) (n - j) .
  \end{equation*}
  Then this follows from Theorem~\ref{thm:qbin:subset} because
  \begin{equation*}
    \binom{j}{2} + \binom{k - j}{2} - \binom{k}{2} + (k - j) n = (k - j) (n -
     j) .
  \end{equation*}
  
\end{proof}

Similarly, one can deduce from Theorem~\ref{thm:qbin:subset} the following
version for the case when $k$ is a negative integer. It trivially also holds
if $n, m \geq 0$, but the identity does not generally hold in the case
when $n$ and $m$ have mixed signs.

\begin{lemma}
  For all negative integers $n, m$ and $k$,
  \begin{equation*}
    \sum_{j \in \{ - 1, - 2, \ldots, k + 1 \}} q^{(k - j) (n - j)}
     \binom{n}{j}_q \binom{m}{k - j}_q = \binom{n + m}{k}_q .
  \end{equation*}
\end{lemma}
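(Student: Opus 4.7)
\emph{Proof plan.} My plan is to parallel the proof of the positive-case Chu-Vandermonde identity \eqref{eq:chuvandermonde:pos} above, using Theorem~\ref{thm:qbin:subset} in its ``all negative'' regime $k \leq n < 0$. Writing $\tau(Z) \assign \sigma(Z) - |Z|(|Z|-1)/2$, the theorem expresses each of $\binom{n+m}{k}_q$, $\binom{n}{j}_q$, $\binom{m}{k-j}_q$ as a signed sum over $D(n+m,k)$, $D(n,j)$, $D(m,k-j)$ respectively, with sign prefactors $(-1)^{n+m-k}$, $(-1)^{n-j}$, $(-1)^{m-k+j}$. Since $(-1)^{n-j}(-1)^{m-k+j} = (-1)^{n+m-k}$, the overall signs match and the identity reduces to an unsigned combinatorial claim.

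The heart of the argument is a bijection: given a $k$-element subset $Y$ of $X_{n+m} = \{|-1, -2, \ldots, n+m\}$, split its multiplicity function at the boundary $n$ by letting $Y_1$ be the restriction of $Y$ to $\{-1, -2, \ldots, n\}$ and $Y_2$ the restriction of $Y$ to $\{n-1, n-2, \ldots, n+m\}$ shifted by $-n$, so that $Y_1 \in D(n, j)$ and $Y_2 \in D(m, k-j)$ with $j = |Y_1|$. Because $M_Y(x) \leq -1$ on each of the $|n|$ points $\{-1, \ldots, n\}$, one automatically has $j \leq n$ and, symmetrically, $k - j \leq m$, so $j$ always lies in $[k-m, n] \subseteq \{k+1, \ldots, -1\}$. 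For indices $j \in \{-1, \ldots, k+1\}$ outside $[k-m, n]$, one of $D(n,j)$ or $D(m, k-j)$ is empty and the corresponding $q$-binomial coefficient vanishes, so the bijection accounts for every $Y \in D(n+m,k)$ exactly once. The exponent then comes out by a routine calculation: the shift contributes $n(k-j)$ to $\sigma$, giving $\sigma(Y) = \sigma(Y_1) + \sigma(Y_2) + n(k-j)$, and the identity
\[-\tfrac{k(k-1)}{2} + \tfrac{j(j-1)}{2} + \tfrac{(k-j)(k-j-1)}{2} = -j(k-j)\]
yields $\tau(Y) = \tau(Y_1) + \tau(Y_2) + (k-j)(n-j)$, exactly matching the exponent on the left-hand side of the claimed identity.

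I expect the main delicate point to be justifying the restriction $j \in \{-1, -2, \ldots, k+1\}$: Theorem~\ref{thm:qbin:subset} uses a different sign prefactor in the regime $n < 0 \leq k$, so the boundary values $j = 0$ and $j = k$, where $\binom{n}{0}_q = 1$ would enter with a sign convention not matching $(-1)^{n-j}$, would corrupt the identity if included. This is also the reason the identity fails for mixed-sign $n, m$, as mentioned just before the lemma.
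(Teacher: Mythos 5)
Your proof is correct and follows exactly the route the paper intends: the paper omits the details, saying only that the identity can be deduced from Theorem~\ref{thm:qbin:subset} ``similarly'' to the positive-$k$ case, and your splitting of a $k$-element subset of $X_{n+m}$ at the boundary $n$, together with the sign cancellation $(-1)^{n-j}(-1)^{m-k+j}=(-1)^{n+m-k}$ and the exponent bookkeeping $\tau(Y)=\tau(Y_1)+\tau(Y_2)+(k-j)(n-j)$, is precisely that argument. Your observation that the multiplicity constraint $M_Y(x)\leq -1$ forces $j\in[k-m,n]\subseteq\{k+1,\ldots,-1\}$, with the remaining indices contributing vanishing $q$-binomial coefficients, correctly accounts for the (essential) exclusion of $j=0$ and $j=k$ from the summation range.
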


As another application of the combinatorial characterization in
Theorem~\ref{thm:qbin:subset}, we readily obtain the following identity. In
the case $n \geq 0$, this identity is well-known and often referred to as
the (commutative version of the) $q$-binomial theorem (in which case the sum
only extends over $k = 0, 1, \ldots, n$). We will discuss the noncommutative
$q$-binomial theorem in the next section.

\begin{theorem}
  For all integers $n$,
  \begin{equation*}
    (- x ; q)_n = \sum_{k \geq 0} q^{k (k - 1) / 2} \binom{n}{k}_q x^k .
  \end{equation*}
\end{theorem}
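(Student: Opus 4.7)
The plan is to split on the sign of $n$ and, in each case, expand $(-x;q)_n$ directly as a product, then recognize the resulting sum via the combinatorial interpretation of $\binom{n}{k}_q$ provided by Theorem~\ref{thm:qbin:subset}.

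When $n \geq 0$, I would expand the finite product $(-x;q)_n = \prod_{j=0}^{n-1}(1+xq^j)$ by choosing, from each factor, either the term $1$ or the term $xq^j$. Each such choice corresponds to a subset $Y \subseteq X_n = \{0,1,\ldots,n-1\}$ and contributes $x^{|Y|} q^{\sigma(Y)}$. Grouping by $k = |Y|$ gives $(-x;q)_n = \sum_{k \geq 0} x^k \sum_{|Y|=k} q^{\sigma(Y)}$, and by Theorem~\ref{thm:qbin:subset} (with $\varepsilon = 1$ in this range) the inner sum equals $q^{k(k-1)/2} \binom{n}{k}_q$, so the identity follows. Terms with $k > n$ vanish on both sides.

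When $n < 0$, I would instead write $(-x;q)_n = \prod_{j=1}^{|n|} (1+xq^{-j})^{-1}$ and expand each factor as a geometric series $\sum_{m \geq 0} (-x)^m q^{-jm}$. Multiplying out, the contribution of a tuple of multiplicities $(m_1,\ldots,m_{|n|}) \in \mathbb{Z}_{\geq 0}^{|n|}$ is $(-x)^{m_1+\cdots+m_{|n|}} q^{-\sum_j j m_j}$. Such a tuple encodes a $k$-element hybrid subset $Y$ of $X_n = \{\,|\,{-1},{-2},\ldots,n\}$ in which the element $-j$ carries multiplicity $m_j$; then $k = |Y| = \sum_j m_j$ and $\sigma(Y) = -\sum_j j m_j$. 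Collecting by $k$ yields $(-x;q)_n = \sum_{k \geq 0} (-x)^k \sum_{|Y|=k} q^{\sigma(Y)}$, and Theorem~\ref{thm:qbin:subset} with $\varepsilon = (-1)^k$ rewrites the inner sum as $(-1)^k q^{k(k-1)/2} \binom{n}{k}_q$, so the signs combine to give the claimed formula.

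The main point requiring care is the bookkeeping in the second case: one must verify that the geometric-series tuples $(m_1,\ldots,m_{|n|})$ biject with $k$-element hybrid subsets of $X_n$ in the sense of Loeb, with $|Y|$ and $\sigma(Y)$ matching under this bijection. Once this dictionary is in place, the sign $(-1)^k$ produced by the geometric expansion aligns precisely with the sign $\varepsilon = (-1)^k$ in Theorem~\ref{thm:qbin:subset}, and the proof reduces to routine bookkeeping. There is nothing delicate about convergence here since, as a formal identity in $x$, each coefficient of $x^k$ is a finite sum on both sides.
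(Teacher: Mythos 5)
Your proposal is correct and follows essentially the same route as the paper: expand the product (finite product for $n\geq 0$, geometric series for $n<0$), match terms of the expansion with $k$-element (hybrid) subsets of $X_n$, and invoke Theorem~\ref{thm:qbin:subset}, with the $(-1)^k$ from the geometric expansion cancelling against $\varepsilon=(-1)^k$. The dictionary between multiplicity tuples and hybrid subsets that you flag as the delicate point is exactly the correspondence the paper uses as well.
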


\begin{proof}
  Suppose that $n \geq 0$, so that
  \begin{equation}
    (- x ; q)_n = (1 + x) (1 + x q) \cdots (1 + x q^{n - 1}) .
    \label{eq:binthm:poch:pos}
  \end{equation}
  Let, as before $X_n = \{ 0, 1, \ldots, n - 1| \}$. To each subset $Y
  \subseteq X_n$ we associate the product of the terms $x q^y$ with $y \in Y$
  in the expansion of \eqref{eq:binthm:poch:pos}. This results in
  \begin{equation*}
    (- x ; q)_n = \sum_{Y \subseteq X_n} q^{\sigma (Y)} x^{| Y |},
  \end{equation*}
  which, by Theorem~\ref{thm:qbin:subset}, reduces to the claimed sum.
  
  Next, let us consider the case $n < 0$. Then, $X_n = \{ | - 1, - 2, \ldots,
  n \}$ and
  \begin{equation*}
    (x ; q)_n = \prod_{j = 1}^{| n |} \frac{1}{1 - x q^{- j}} = \prod_{j =
     1}^{| n |} \sum_{m \geq 0} x^m q^{- j m} .
  \end{equation*}
  Similar to the previous case, terms of the expansion of this product are in
  natural correspondence with (hybrid) subsets $Y \subseteq X_n$. Namely, to
  $Y$ we associate the product of the terms $x^m q^{y m}$ where $y \in Y$ and
  $m = M_Y (y)$ is the multiplicity of $y$. Therefore,
  \begin{equation*}
    (- x ; q)_n = \sum_{Y \subseteq X_n} (- 1)^{| Y |} q^{\sigma (Y)} x^{| Y
     |},
  \end{equation*}
  and the claim again follows directly from Theorem~\ref{thm:qbin:subset}
  (note that $\varepsilon = (- 1)^k$ in the present case).
\end{proof}

\section{The binomial theorem}\label{sec:binomialtheorem}

When introducing binomial coefficients with negative entries, Loeb
\cite{loeb-neg} also provided an extension of the binomial theorem
\begin{equation*}
  (x + y)^n = \sum_{k = 0}^n \binom{n}{k} x^k y^{n - k},
\end{equation*}
the namesake of the binomial coefficients, to the case when $n$ and $k$ may be
negative integers. In this section, we show that this extension can also be
generalized to the case of $q$-binomial coefficients.

Suppose that $f (x)$ is a function with Laurent expansions
\begin{equation}
  f (x) = \sum_{k \geq - N} a_k x^k, \hspace{3em} f (x) = \sum_{k
  \geq - N} b_{- k} x^{- k} \label{eq:f:exp:0:infty}
\end{equation}
around $x = 0$ and $x = \infty$, respectively. Let us extract coefficients of
these expansions by writing
\begin{equation*}
  \{ x^k \} f (x) \assign \left\{ \begin{array}{ll}
     a_k, & \text{if $k \geq 0$,}\\
     b_k, & \text{if $k < 0$.}
   \end{array} \right.
\end{equation*}
Loosely speaking, $\{ x^k \} f (x)$ is the coefficient of $x^k$ in the
appropriate Laurent expansion of $f (x)$.

\begin{theorem}[\cite{loeb-neg}]
  \label{thm:binomialtheorem:loeb}For all integers $n$ and $k$,
  \begin{equation*}
    \binom{n}{k} = \{ x^k \} (1 + x)^n .
  \end{equation*}
\end{theorem}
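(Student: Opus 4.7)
The plan is to characterize $a_{n,k} \assign \{x^k\}(1+x)^n$ by the same recurrence and boundary data as Loeb's binomial coefficient, then invoke Lemma~\ref{lem:qbin:pascal} at $q=1$, which states exactly that these identities determine $\binom{n}{k}$ uniquely. For this to make sense, one first has to check that $(1+x)^n$ admits Laurent expansions of the form \eqref{eq:f:exp:0:infty} for every integer $n$. When $n \geq 0$ the function is a polynomial; when $n < 0$ the expansion at $0$ is the classical binomial series $\sum_{k \geq 0} \frac{n(n-1)\cdots(n-k+1)}{k!} x^k$, and the expansion at $\infty$ is obtained by writing $(1+x)^n = x^n(1+x^{-1})^n$ and expanding $(1+x^{-1})^n$ as a series in $x^{-1}$.

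The main step is to verify Pascal's relation $a_{n,k} = a_{n-1,k-1} + a_{n-1,k}$ for every $(n,k) \neq (0,0)$ by extracting the coefficient of $x^k$ from $(1+x)^n = (1+x)^{n-1} + x(1+x)^{n-1}$. Linearity of coefficient extraction together with the shift $\{x^k\}(xf) = \{x^{k-1}\}f$ yields the relation whenever $k$ and $k-1$ share a sign, which handles $k \geq 1$ (use the expansion at $0$ throughout) and $k \leq -1$ (use the expansion at $\infty$ throughout). The case $k = 0$ is delicate because $\{x^0\}$ reads off the expansion at $0$ while $\{x^{-1}\}$ reads off the expansion at $\infty$, so the shift identity can genuinely fail. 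I would therefore treat this case by direct computation: $a_{n,0} = 1$ for every $n$ as the constant term of the expansion at $0$, and the coefficient of $x^{-1}$ in the expansion of $(1+x)^{n-1}$ at $\infty$ vanishes except when $n-1 = -1$, in which case it equals $1$. Hence $a_{n,0} = a_{n-1,-1} + a_{n-1,0}$ holds for all $n \neq 0$ and fails only at $n = 0$, matching exactly the excluded case in Lemma~\ref{lem:qbin:pascal}.

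The initial conditions are immediate: $a_{n,0} = 1$ was already checked, and $a_{n,n} = 1$ follows from the leading term of $(1+x)^n$ --- the top of the polynomial when $n \geq 0$ and the $x^n$ in $x^n(1+x^{-1})^n$ when $n < 0$. Combining these with Pascal and applying Lemma~\ref{lem:qbin:pascal} at $q = 1$ forces $a_{n,k} = \binom{n}{k}$ for all integers $n$ and $k$. The only genuine obstacle in the argument is the $k = 0$ instance of Pascal, since this is precisely where $\{x^k\}$ switches between the two Laurent expansions; conveniently, it is also the only instance where Pascal is allowed to fail for Loeb's binomial coefficients, so the obstruction and the exception cancel.
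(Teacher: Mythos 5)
Your argument is correct. There is, however, nothing in the paper to compare it against: Theorem~\ref{thm:binomialtheorem:loeb} is imported from Loeb with a citation and no proof, so your write-up supplies a proof where the paper gives none. Your route --- showing that $a_{n,k} \assign \{x^k\}(1+x)^n$ satisfies Pascal's relation for all $(n,k)\neq(0,0)$ together with $a_{n,0}=a_{n,n}=1$, and then invoking the uniqueness half of Lemma~\ref{lem:qbin:pascal} at $q=1$ (which, by Remark~\ref{rk:bin:gamma}, identifies the $q=1$ specialization with Loeb's coefficients) --- fits naturally into the paper's framework, and you correctly isolate the only delicate point: the $k=0$ instance of Pascal, where $\{x^0\}$ and $\{x^{-1}\}$ read off different Laurent expansions. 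Your direct computation there ($a_{n-1,-1}$ equals $1$ exactly when $n=0$ and $0$ otherwise, since the exponent $n-1-j=-1$ forces $j=n\ge 0$ while $n-1<0$) shows Pascal fails precisely at the excluded pair $(0,0)$, which is exactly what the characterization requires. Two minor caveats. First, your conclusion inherits the assertion in Lemma~\ref{lem:qbin:pascal} that the recurrence plus initial data determine every value; the paper states this without spelling out the order of deduction, so your proof is only as complete as that (easy, but unproved-in-the-paper) uniqueness claim. Second, an alternative that bypasses the characterization entirely is to compare coefficients directly with the limit definition \eqref{eq:binom:gamma:eps}: for $k\ge 0$ the coefficient of $x^k$ at the origin is $n(n-1)\cdots(n-k+1)/k!$, which matches the Gamma limit, while for $k<0$ one expands $x^n(1+x^{-1})^n$ at infinity and appeals to the symmetry and reflection rules. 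Your version is the cleaner of the two and is the one most consistent with how the rest of the paper argues.
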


\begin{example}
  As $x \rightarrow \infty$,
  \begin{equation*}
    (1 + x)^{- 3} = x^{- 3} - 3 x^{- 4} + 6 x^{- 5} + O (x^{- 6}),
  \end{equation*}
  so that, for instance,
  \begin{equation*}
    \binom{- 3}{- 5} = 6.
  \end{equation*}
\end{example}

It is well-known (see, for instance, \cite[Theorem~5.1]{kc-q}) that, if $x$
and $y$ are noncommuting variables such that $y x = q x y$, then the
$q$-binomial coefficients arise from the expansion of $(x + y)^n$.

\begin{theorem}
  \label{thm:binomialtheorem:q}Let $n \geq 0$. If $y x = q x y$, then
  \begin{equation}
    (x + y)^n = \sum_{k = 0}^n \binom{n}{k}_q x^k y^{n - k} .
    \label{eq:binomialtheorem:q:pos}
  \end{equation}
\end{theorem}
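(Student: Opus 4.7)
The plan is to proceed by induction on $n \geq 0$, exploiting the $q$-commutation $yx = qxy$ together with the form \eqref{eq:qbin:pascal2} of Pascal's relation. The base case $n = 0$ is immediate since both sides equal $1$.

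For the inductive step, I would first record the basic consequence of $yx = qxy$, namely $y^m x = q^m x y^m$ for all $m \geq 0$, which is itself a quick induction on $m$. Assuming \eqref{eq:binomialtheorem:q:pos} for some $n \geq 0$, I would write $(x+y)^{n+1} = (x+y)^n (x+y)$, substitute the inductive hypothesis, and split the result into two sums:
\begin{equation*}
  (x+y)^{n+1} = \sum_{k=0}^n \binom{n}{k}_q x^k y^{n-k} x + \sum_{k=0}^n \binom{n}{k}_q x^k y^{n+1-k}.
\end{equation*}
Applying $y^{n-k} x = q^{n-k} x y^{n-k}$ to the first sum and then reindexing $k \mapsto k-1$, I would arrive at
\begin{equation*}
  (x+y)^{n+1} = \sum_{k=1}^{n+1} q^{n+1-k}\binom{n}{k-1}_q x^k y^{n+1-k} + \sum_{k=0}^n \binom{n}{k}_q x^k y^{n+1-k}.
\end{equation*}

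Finally, combining these into a single sum over $0 \leq k \leq n+1$, the coefficient of $x^k y^{n+1-k}$ becomes $q^{n+1-k}\binom{n}{k-1}_q + \binom{n}{k}_q$, which equals $\binom{n+1}{k}_q$ by the alternative Pascal relation \eqref{eq:qbin:pascal2}. This closes the induction. The boundary terms $k = 0$ and $k = n+1$ are handled automatically because $\binom{n}{-1}_q = 0$ and $\binom{n}{n+1}_q = 0$ under the conventions from \eqref{eq:qbin:def:pochlim}.

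There is no real obstacle here: the entire content of the argument is packaged into the commutation identity $y^m x = q^m x y^m$ and the specific form of Pascal's rule \eqref{eq:qbin:pascal2}, both of which are already available. The only step requiring a little attention is choosing the correct variant of Pascal's rule (the one with $q^{n-k}$ on the $\binom{n-1}{k-1}_q$ term rather than $q^k$ on the $\binom{n-1}{k}_q$ term), which is dictated by the fact that the $q$-factor in the induction comes from pushing powers of $y$ past a single $x$ on the right.
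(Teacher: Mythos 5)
Your induction is correct and complete: the base case, the commutation identity $y^m x = q^m x y^m$, the reindexing, the use of the Pascal relation \eqref{eq:qbin:pascal2} in the form $\binom{n+1}{k}_q = q^{n+1-k}\binom{n}{k-1}_q + \binom{n}{k}_q$ (valid here since $n+1 \geq 1$), and the boundary cases via $\binom{n}{-1}_q = \binom{n}{n+1}_q = 0$ all check out. The paper does not prove this statement itself but cites it as well known from Kac--Cheung, whose proof is essentially this same induction, so your argument fills in exactly the standard reference.
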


Our next result shows that the restriction to $n \geq 0$ is not
necessary. In fact, we prove the following result, which extends both the
noncommutative $q$-binomial Theorem~\ref{thm:binomialtheorem:q} and Loeb's
Theorem~\ref{thm:binomialtheorem:loeb}. In analogy with the classical case, we
consider expansions of $f_n (x, y) = (x + y)^n$ in the two $q$-commuting
variables $x, y$. As before, we can expand $f_n (x, y)$ in two different ways,
that is,
\begin{equation*}
  f_n (x, y) = \sum_{k \geq 0} a_k x^k y^{n - k}, \hspace{3em} f_n (x,
   y) = \sum_{k \geq n} b_{- k} x^{- k} y^{n + k} .
\end{equation*}
Again, we extract coefficients of these expansions by writing
\begin{equation*}
  \{ x^k y^{n - k} \} f_n (x, y) \assign \left\{ \begin{array}{ll}
     a_k, & \text{if $k \geq 0$,}\\
     b_k, & \text{if $k < 0$.}
   \end{array} \right.
\end{equation*}
\begin{theorem}
  \label{thm:binomialtheorem:loeb:q}Suppose that $y x = q x y$. Then, for all
  integers $n$ and $k$,
  \begin{equation*}
    \binom{n}{k}_q = \{ x^k y^{n - k} \} (x + y)^n .
  \end{equation*}
\end{theorem}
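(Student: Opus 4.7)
For $n \geq 0$ the result follows immediately from Theorem~\ref{thm:binomialtheorem:q}: $(x+y)^n$ is then the polynomial $\sum_{k=0}^n \binom{n}{k}_q x^ky^{n-k}$, which coincides with its expansion both around $x = 0$ and around $x = \infty$, and $\binom{n}{k}_q$ vanishes for $k$ outside $\{0, 1, \ldots, n\}$, matching the fact that no other monomials $x^ky^{n-k}$ occur.

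For $n < 0$ the plan is to obtain closed-form factorizations of $(x+y)^n$ that separately realize the two Laurent expansions. An easy induction on $n \geq 1$, based on the commutations $y^j(1 + xy^{-1}) = (1 + q^j xy^{-1}) y^j$ and $x^j(1 + x^{-1}y) = (1 + q^{-j}x^{-1}y) x^j$ (both direct consequences of $yx = qxy$), yields the twin identities $(x+y)^n = \prod_{j=0}^{n-1}(1 + q^j xy^{-1})\cdot y^n = \prod_{j=1}^n (1 + q^{-j} x^{-1} y)\cdot x^n$. Because the factors in each product are polynomials in a single variable ($xy^{-1}$, respectively $x^{-1}y$), they commute with one another, so inverting gives, for $m = -n > 0$, $(x+y)^{-m} = y^{-m}\prod_{j=0}^{m-1}(1+q^jxy^{-1})^{-1} = x^{-m}\prod_{j=1}^m(1+q^{-j}x^{-1}y)^{-1}$. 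The first form realizes the expansion of $(x+y)^n$ around $x = 0$ (a power series in $x$); the second, the expansion around $x = \infty$ (a power series in $x^{-1}$).

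Coefficients are then extracted by applying the classical scalar $q$-series identity $(z;q)_m^{-1} = \sum_{k\geq 0}\binom{m+k-1}{k}_q z^k$ (suitably specialized, including to $q^{-1}$ for the second factorization), together with the elementary reorderings $(xy^{-1})^k = q^{-k(k-1)/2}x^ky^{-k}$, $(x^{-1}y)^k = q^{k(k+1)/2}y^kx^{-k}$, and $y^\ell x^k = q^{\ell k}x^ky^\ell$ for all integers $\ell, k$. After collecting exponents, the $x = 0$ expansion produces, for $k \geq 0$, $\{x^ky^{n-k}\}(x+y)^n = (-1)^kq^{k(2n-k+1)/2}\binom{k-n-1}{k}_q$, which equals $\binom{n}{k}_q$ by the reflection formula (Theorem~\ref{thm:qbin:neg}). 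The $x = \infty$ expansion produces, for $k \leq n$, a coefficient matching $\binom{n}{k}_q$ by the third identity in Corollary~\ref{cor:qbin:forms}, once one verifies the elementary exponent identity $(n-k)(n+k+1) = n(n+1) - k(k+1)$. For $n < k < 0$ the $x = \infty$ expansion contributes nothing, matching $\binom{n}{k}_q = 0$.

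The main obstacle is purely bookkeeping: three separate sources of $q$-exponents---(i) the scalar $q$-Pochhammer identity, (ii) the normal-ordering $(xy^{-1})^k = q^{-k(k-1)/2}x^ky^{-k}$, and (iii) the commutation of the leftover $y^{-m}$ or $x^{-m}$ past the $x^k$ or $y^k$ factor---must be combined consistently. Once this is organized, the identification with $\binom{n}{k}_q$ is mechanical via the reflection identities already established in Section~\ref{sec:reflect}. A Pascal-induction alternative is tempting but awkward at the boundary $k = 0$, where the two expansions meet, which is precisely what motivates the direct factorization approach.
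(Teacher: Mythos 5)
Your proposal is correct, but it takes a genuinely different route from the paper. The paper first proves $(x+y)^{-n}=\sum_{k\geq 0}\binom{-n}{k}_q x^k y^{-n-k}$ by induction on $n\geq 1$, starting from the geometric-series computation of $(x+y)^{-1}$ and using the generalized Chu--Vandermonde identity \eqref{eq:chuvandermonde:pos} with $m=-1$ to multiply expansions; it then obtains the $k<0$ case (the expansion at $x=\infty$) not by a second computation but by a symmetry trick: swapping the roles of $x$ and $y$ replaces $q$ by $q^{-1}$, and Lemma~\ref{lem:qbin:basic}\ref{i:qbin:qinv} converts $q^{k(n-k)}\binom{n}{k}_{q^{-1}}$ back into $\binom{n}{k}_q$. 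You instead produce the two Laurent expansions directly from the noncommutative factorizations $(x+y)^m=\prod_{j=0}^{m-1}(1+q^j x y^{-1})\,y^m=\prod_{j=1}^{m}(1+q^{-j}x^{-1}y)\,x^m$ (both of which I checked and which follow from the commutations you state), invert the commuting factors, and apply the classical $q$-binomial series $\frac{1}{(z;q)_m}=\sum_{k\geq 0}\binom{m+k-1}{k}_q z^k$ together with the reflection formula \eqref{eq:qbin:neg} and the third identity of Corollary~\ref{cor:qbin:forms}. Your exponent bookkeeping for the $x=0$ expansion is correct: the coefficient $(-1)^k q^{k(2n-k+1)/2}\binom{k-n-1}{k}_q$ is exactly the right-hand side of Theorem~\ref{thm:qbin:neg} with $\operatorname{sgn}(k)=1$. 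What your approach buys is an explicit closed product formula for $(x+y)^{-m}$ and independence from the Chu--Vandermonde identity (whose proof in the paper rests on the combinatorial Theorem~\ref{thm:qbin:subset}); what it costs is importing the commutative $q$-binomial series as an external ingredient (though this is essentially the paper's own commutative $q$-binomial theorem at the end of Section~\ref{sec:combinatorial}, specialized to negative $n$, so it is available internally) and somewhat heavier normal-ordering computations, which you correctly identify and would need to carry out in full for the $x=\infty$ branch, including the passage from base $q^{-1}$ back to base $q$.
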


\begin{proof}
  Using the geometric series,
  \begin{equation*}
    (x + y)^{- 1} = y^{- 1} (x y^{- 1} + 1)^{- 1} = y^{- 1} \sum_{k \geq
     0} (- 1)^k (x y^{- 1})^k .
  \end{equation*}
  and, applying the $q$-commutativity,
  \begin{equation*}
    (x + y)^{- 1} = \sum_{k \geq 0} (- 1)^k q^{- k (k + 1) / 2} x^k y^{-
     k - 1} = \sum_{k \geq 0} \binom{- 1}{k}_q x^k y^{- 1 - k} .
  \end{equation*}
  (Consequently, the claim holds when $n = - 1$ and $k \geq 0$.) More
  generally, we wish to show that, for all $n \geq 1$,
  \begin{equation}
    (x + y)^{- n} = \sum_{k \geq 0} \binom{- n}{k}_q x^k y^{- n - k} .
    \label{eq:binomialtheorem:q:neg}
  \end{equation}
  We just found that \eqref{eq:binomialtheorem:q:neg} holds for $n = 1$. On
  the other hand, assume that \eqref{eq:binomialtheorem:q:neg} holds for some
  $n$. Then,
  \begin{eqnarray*}
    (x + y)^{- n - 1} & = & (x + y)^{- n} (x + y)^{- 1}\\
    & = & \left(\sum_{k \geq 0} \binom{- n}{k}_q x^k y^{- n - k}
    \right) \left(\sum_{k \geq 0} \binom{- 1}{k}_q x^k y^{- 1 - k}
    \right)\\
    & = & \sum_{k \geq 0} \sum_{j = 0}^k \binom{- n}{j}_q \binom{- 1}{k
    - j}_q q^{(k - j) (- n - j)} x^k y^{- n - 1 - k}\\
    & = & \sum_{k \geq 0} \binom{- n - 1}{k}_q x^k y^{- n - 1 - k},
  \end{eqnarray*}
  where the last step is an application of the generalized Chu-Vandermonde
  identity \eqref{eq:chuvandermonde:pos} with $m = - 1$. By induction,
  \eqref{eq:binomialtheorem:q:neg} therefore is true for all $n \geq 1$.
  
  We have therefore shown that \eqref{eq:binomialtheorem:q:pos} holds for all
  integers $n$. This implies the present claim in the case $k \geq 0$.
  The case when $k < 0$ can also be deduced from
  \eqref{eq:binomialtheorem:q:neg}. Indeed, observe that $x y = q^{- 1} y x$,
  so that, for any integer $n$, by \eqref{eq:binomialtheorem:q:pos} and
  \eqref{eq:binomialtheorem:q:neg},
  \begin{eqnarray*}
    (x + y)^n & = & \sum_{k \geq 0} \binom{n}{k}_{q^{- 1}} y^k x^{n -
    k}\\
    & = & \sum_{k \leq n} q^{k (n - k)} \binom{n}{k}_{q^{- 1}} x^k y^{n
    - k} = \sum_{k \leq n} \binom{n}{k}_{q^{}} x^k y^{n - k} .
  \end{eqnarray*}
  When $n \geq 0$, this is just a version of
  \eqref{eq:binomialtheorem:q:pos}. However, when $k < 0$, we deduce that
  \begin{equation*}
    \{ x^k y^{n - k} \} (x + y)^n = \binom{n}{k}_q,
  \end{equation*}
  as claimed.
\end{proof}

\section{Lucas' theorem}\label{sec:lucas}

Lucas' famous theorem \cite{lucas78} states that, if $p$ is a prime, then
\begin{equation*}
  \binom{n}{k} \equiv \binom{n_0}{k_0} \binom{n_1}{k_1} \cdots
   \binom{n_d}{k_d} \quad (\operatorname{mod} p),
\end{equation*}
where $n_i$ and $k_i$ are the $p$-adic digits of the nonnegative integers $n$
and $k$, respectively. Our first goal is to prove that this congruence in fact
holds for all integers $n$ and $k$. The next section is then concerned with
further extending these congruences to the polynomial setting.

\begin{example}
  The base $p$ expansion of a negative integer is infinite. However, only
  finitely many digits are different from $p - 1$. For instance, in base $7$,
  \begin{equation*}
    - 11 = 3 + 5 \cdot 7 + 6 \cdot 7^2 + 6 \cdot 7^3 + \ldots
  \end{equation*}
  which we will abbreviate as $- 11 = (3, 5, 6, 6, \ldots)_7$. Similarly, $-
  19 = (2, 4, 6, 6, \ldots)_7$. The extension of the Lucas congruences that is
  proved below shows that
  \begin{equation*}
    \binom{- 11}{- 19} \equiv \binom{3}{2} \binom{5}{4} \binom{6}{6}
     \binom{6}{6} \cdots = 3 \cdot 5 \equiv 1 \quad (\operatorname{mod} 7),
  \end{equation*}
  without computing that the left-hand side is $43, 758$.
\end{example}

The main result of this section, Theorem~\ref{thm:bin:lucas}, can also be
deduced from the polynomial generalization in the next section. However, we
give a direct and uniform proof here to make the ingredients more transparent.
A crucial ingredient in the usual proofs of Lucas' classical theorem is the
simple congruence
\begin{equation}
  (1 + x)^p \equiv 1 + x^p \quad (\operatorname{mod} p), \label{eq:freshman:pos}
\end{equation}
sometimes jokingly called a freshman's dream, which encodes the observation
that $\binom{p}{k}$ is divisible by the prime $p$, except in the boundary
cases $k = 0$ and $k = p$.

\begin{theorem}
  \label{thm:bin:lucas}Let $p$ be a prime. Then, for any integers $n$ and $k$,
  \begin{equation*}
    \binom{n}{k} \equiv \binom{n_0}{k_0} \binom{n'}{k'} \quad (\operatorname{mod} p),
  \end{equation*}
  where $n = n_0 + n' p$ and $k = k_0 + k' p$ with $n_0, k_0 \in \{ 0, 1,
  \ldots, p - 1 \}$.
\end{theorem}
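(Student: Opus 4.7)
The plan is to mimic the classical proof of Lucas' theorem by extracting coefficients from $(1+x)^n$, using the uniform extension provided by Loeb's binomial theorem (Theorem~\ref{thm:binomialtheorem:loeb}): for all integers $n,k$, $\binom{n}{k}=\{x^k\}(1+x)^n$, where the bracket denotes extraction from the appropriate Laurent expansion (at $0$ if $k\geq 0$, at $\infty$ if $k<0$).

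First, I would upgrade the freshman's dream \eqref{eq:freshman:pos} to a Laurent-series statement valid for all integers $m$:
$$(1+x)^{mp}\equiv(1+x^p)^m\pmod p,$$
to be read both in $\mathbb{Z}[[x]]$ and in the appropriate Laurent ring at $\infty$. For $m\geq 0$ this is immediate by raising \eqref{eq:freshman:pos} to the $m$-th power. For $m<0$, both sides are well-defined Laurent series (in the expansion at $0$, both have leading term $1$; in the expansion at $\infty$, both have leading term $x^{mp}$), and multiplying the congruence for $|m|\geq 0$ by the inverse of either side — which exists in the corresponding Laurent ring modulo $p$ — gives the congruence for $m<0$.

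Second, I would write $n=n_0+n'p$ and factor
$$(1+x)^n=(1+x)^{n_0}(1+x)^{n'p}\equiv(1+x)^{n_0}(1+x^p)^{n'}\pmod p,$$
and then extract $\{x^{k_0+k'p}\}$ from the right-hand side. The factor $(1+x)^{n_0}$ is an honest polynomial of degree $n_0\leq p-1$, so its nonzero $x$-exponents lie in $\{0,1,\ldots,p-1\}$, while every term of $(1+x^p)^{n'}$ carries an $x$-exponent divisible by $p$. Since $0\leq k_0<p$, the only way to form $x^{k_0+k'p}$ in the product is by multiplying the $x^{k_0}$-term of $(1+x)^{n_0}$, contributing $\binom{n_0}{k_0}$, by the $x^{pk'}$-term of $(1+x^p)^{n'}$. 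Under the substitution $y=x^p$, the latter equals $\{y^{k'}\}(1+y)^{n'}=\binom{n'}{k'}$ by Loeb's theorem; crucially, $k'$ and $k$ always have the same sign (since $k_0\geq 0$ is bounded by $p-1$), so the expansion point in $y$ matches the one used in $x$, which is what legitimizes the substitution.

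The main obstacle is the careful bookkeeping of the two Laurent expansions when $k<0$: one must verify that the upgraded freshman's dream, the factorization of $(1+x)^n$, and the coefficient extraction $\{x^{k_0+k'p}\}$ from a product of a polynomial and a Laurent series in $x^p$ are all compatible in the ring of Laurent series at $\infty$, not just in $\mathbb{Z}[[x]]$. Once these foundational compatibilities are laid out, the Lucas congruence drops out of a one-line computation exactly as in the classical proof.
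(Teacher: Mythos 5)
Your proposal is correct and follows essentially the same route as the paper's proof: extend the freshman's dream to negative exponents via inverses in the appropriate Laurent rings, factor $(1+x)^n=(1+x)^{n_0}(1+x)^{n'p}$, and use the fact that $(1+x)^{n_0}$ is a polynomial of degree at most $p-1$ to split the coefficient extraction, finishing with Theorem~\ref{thm:binomialtheorem:loeb}. The observation that $k$ and $k'$ share the same sign (so the expansion point is consistent under $y=x^p$) is exactly the bookkeeping the paper relies on as well.
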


\begin{proof}
  It is a consequence of \eqref{eq:freshman:pos} (and the algebra of Laurent
  series) that, for any prime $p$,
  \begin{equation}
    (1 + x)^{- p} \equiv (1 + x^p)^{- 1} \quad (\operatorname{mod} p),
    \label{eq:freshman:neg}
  \end{equation}
  where it is understood that both sides are expanded, as in
  \eqref{eq:f:exp:0:infty}, either around $0$ or $\infty$. Hence, in the same
  sense,
  \begin{equation}
    (1 + x)^{n p} \equiv (1 + x^p)^n \quad (\operatorname{mod} p)
    \label{eq:freshman:n}
  \end{equation}
  for any integer $n$.
  
  With the notation from the previous section, we observe that
  \begin{equation*}
    \{ x^k \} (1 + x)^n = \{ x^k \} (1 + x)^{n_0} (1 + x)^{n' p} \equiv \{
     x^k \} (1 + x)^{n_0} (1 + x^p)^{n'} \quad (\operatorname{mod} p),
  \end{equation*}
  where the congruence is a consequence of \eqref{eq:freshman:n}. Since $n_0
  \in \{ 0, 1, \ldots, p - 1 \}$, we conclude that
  \begin{equation*}
    \{ x^k \} (1 + x)^n \equiv (\{ x^{k_0} \} (1 + x)^{n_0}) (\{ x^{k' p} \}
     (1 + x^p)^{n'}) \quad (\operatorname{mod} p) .
  \end{equation*}
  This is obvious if $k \geq 0$, but remains true for negative $k$ as
  well (because $(1 + x)^{n_0}$ is a polynomial, in which case the expansions
  \eqref{eq:f:exp:0:infty} around $0$ and $\infty$ agree). Thus,
  \begin{equation*}
    \{ x^k \} (1 + x)^n \equiv (\{ x^{k_0} \} (1 + x)^{n_0}) (\{ x^{k'} \} (1
     + x)^{n'}) \quad (\operatorname{mod} p) .
  \end{equation*}
  Applying Theorem~\ref{thm:binomialtheorem:loeb} to each term, it follows
  that
  \begin{equation*}
    \binom{n}{k} \equiv \binom{n_0}{k_0} \binom{n'}{k'} \quad (\operatorname{mod} p),
  \end{equation*}
  as claimed.
\end{proof}

\section{A $q$-analog of Lucas' theorem}\label{sec:lucas:q}

Let $\Phi_m (q)$ be the $m$th cyclotomic polynomial. In this section, we prove
congruences of the type $A (q) \equiv B (q)$ modulo $\Phi_m (q)$, where $A
(q), B (q)$ are Laurent polynomials. The congruence is to be interpreted in
the natural sense that the difference $A (q) - B (q)$ is divisible by $\Phi_m
(q)$.

\begin{example}
  \label{eg:qbin:lucas}Following the notation in Theorem~\ref{thm:bin:lucas},
  in the case $(n, k) = (- 4, - 8)$, we have $(n_0, k_0) = (2, 1)$ and $(n',
  k') = (- 2, - 3)$. We reduce modulo $\Phi_3 (q) = 1 + q + q^2$. The result
  we prove below shows that
  \begin{equation*}
    \binom{- 4}{- 8}_q \equiv \binom{2}{1}_q \binom{- 2}{- 3} \quad
     (\operatorname{mod} \Phi_3 (q)) .
  \end{equation*}
  Here,
  \begin{eqnarray*}
    \binom{- 4}{- 8}_q & = & \frac{1}{q^{22}} \Phi_5 (q) \Phi_6 (q) \Phi_7
    (q)\\
    & = & \frac{1}{q^{22}} (1 - q + q^2) (1 + q + q^2 + q^3 + q^4) (1 + q +
    q^2 + \ldots + q^6)
  \end{eqnarray*}
  as well as
  \begin{equation*}
    \binom{2}{1}_q \binom{- 2}{- 3} = - 2 (1 + q),
  \end{equation*}
  and the meaning of the congruence is that
  \begin{equation*}
    \binom{- 4}{- 8}_q - \binom{2}{1}_q \binom{- 2}{- 3} = \Phi_3 (q) \cdot
     \frac{p_{21} (q)}{q^{22}},
  \end{equation*}
  where $p_{21} (q) = 1 + q^2 + 2 q^3 + q^4 + \ldots - 2 q^{19} + 2 q^{21}$ is
  a polynomial of degree $21$. Observe how, upon setting $q = 1$, we obtain
  the Lucas congruence
  \begin{equation*}
    \binom{- 4}{- 8} \equiv \binom{2}{1} \binom{- 2}{- 3} \quad (\operatorname{mod}
     3),
  \end{equation*}
  provided by Theorem~\ref{thm:bin:lucas} (the two sides of the congruence are
  equal to $35$ and $- 4$, respectively).
\end{example}

In the case $n, k \geq 0$, the following $q$-analog of Lucas' classical
binomial congruence has been obtained by Olive \cite{olive-pow} and
D\'esarm\'enien \cite{desarmenien-q}. A nice proof based on a group
action is given by Sagan \cite{sagan-qcong}, who attributes the
combinatorial idea to Strehl. We show that these congruences extend uniformly
to all integers $n$ and $k$. A minor difference to keep in mind is that the
$q$-binomial coefficients in this extended setting are Laurent polynomials
(see Example~\ref{eg:qbin:lucas}).

\begin{theorem}
  \label{thm:qbin:lucas}Let $m \geq 2$ be an integer. For any integers
  $n$ and $k$,
  \begin{equation*}
    \binom{n}{k}_q \equiv \binom{n_0}{k_0}_q \binom{n'}{k'} \quad (\operatorname{mod}
     \Phi_m (q)),
  \end{equation*}
  where $n = n_0 + n' m$ and $k = k_0 + k' m$ with $n_0, k_0 \in \{ 0, 1,
  \ldots, m - 1 \}$.
\end{theorem}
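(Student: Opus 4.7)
The plan is to follow the blueprint of the proof of Theorem~\ref{thm:bin:lucas}, but with the noncommutative $q$-binomial theorem (Theorem~\ref{thm:binomialtheorem:loeb:q}) in place of the ordinary binomial theorem. Let $x$ and $y$ be $q$-commuting variables, $yx = qxy$. Rewriting $\binom{n}{k}_q = \{x^k y^{n-k}\}(x+y)^n$ reduces the claim to a congruence about coefficient extraction from $(x+y)^n$ modulo $\Phi_m(q)$.

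The first ingredient is a $q$-analog of the freshman's dream,
\begin{equation*}
 (x+y)^m \equiv x^m + y^m \quad (\operatorname{mod} \Phi_m(q)),
\end{equation*}
which, via Theorem~\ref{thm:binomialtheorem:q}, amounts to the well-known fact that $\Phi_m(q) \mid \binom{m}{j}_q$ for $0 < j < m$ (since $\Phi_m(q)$ divides $[m]_q = 1 + q + \cdots + q^{m-1}$ but is coprime to $[j]_q!$ and $[m-j]_q!$ in that range). Because $y^m x^m = q^{m^2} x^m y^m$ and $q^{m^2} \equiv 1 \pmod{\Phi_m(q)}$, the monomials $X = x^m$ and $Y = y^m$ commute modulo $\Phi_m(q)$. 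I would then extend the freshman's dream to negative powers,
\begin{equation*}
 (x+y)^{-m} \equiv (x^m + y^m)^{-1} \quad (\operatorname{mod} \Phi_m(q)),
\end{equation*}
by taking the formal Laurent expansion prescribed in Theorem~\ref{thm:binomialtheorem:loeb:q} and multiplying through by $(x+y)^m$. An induction on $|n'|$ then yields $(x+y)^{n'm} \equiv (x^m + y^m)^{n'} \;(\operatorname{mod} \Phi_m(q))$ for every integer $n'$.

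To conclude, write $n = n_0 + n' m$ and $k = k_0 + k' m$ with $n_0, k_0 \in \{0, 1, \ldots, m-1\}$, and factor
\begin{equation*}
 (x+y)^n = (x+y)^{n_0}(x+y)^{n' m} \equiv (x+y)^{n_0}(X+Y)^{n'} \quad (\operatorname{mod} \Phi_m(q)).
\end{equation*}
The first factor is the polynomial $\sum_{j=0}^{n_0} \binom{n_0}{j}_q x^j y^{n_0 - j}$, whose $x$- and $y$-degrees are both strictly less than $m$; hence extracting the coefficient of $x^k y^{n-k} = x^{k_0 + k' m} y^{(n_0 - k_0) + (n' - k') m}$ decouples the two factors, contributing $\binom{n_0}{k_0}_q$ from the first and $\{X^{k'} Y^{n'-k'}\}(X+Y)^{n'} = \binom{n'}{k'}$ from the second, the latter by Loeb's Theorem~\ref{thm:binomialtheorem:loeb} applied to the commuting indeterminates $X, Y$.

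The main obstacle is the negative-power freshman's dream: in the noncommutative Laurent-series setting one has to specify compatible expansions of $(x+y)^{-m}$ and $(x^m + y^m)^{-1}$ and verify that the congruence modulo $\Phi_m(q)$ survives those infinite expansions. Each coefficient in the $x$-expansion of $(x+y)^{-m}$ is already a Laurent polynomial in $q$, so the reduction is well defined, but one must check term-by-term that it matches the corresponding coefficient of $(x^m + y^m)^{-1}$ (and, in Step~4, that the splitting of coefficient extraction across the two factors is legitimate once infinitely many terms are present). Once this bookkeeping is carried out, the remainder of the argument is mechanical.
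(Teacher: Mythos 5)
Your proposal is correct and follows essentially the same route as the paper: reduce to coefficient extraction via the noncommutative binomial theorem, establish the $q$-freshman's dream $(x+y)^{nm}\equiv(x^m+y^m)^n \pmod{\Phi_m(q)}$ for all integers $n$ (including the negative-exponent Laurent-series case), and decouple the coefficient extraction using $n_0,k_0<m$. The only cosmetic difference is that the paper applies Theorem~\ref{thm:binomialtheorem:loeb:q} to $X=x^m$, $Y=y^m$ with commutation parameter $q^{m^2}$ and then reduces $q^{m^2}\equiv 1$, whereas you treat $X,Y$ as commuting modulo $\Phi_m(q)$ and invoke Loeb's theorem directly.
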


\begin{proof}
  Suppose throughout that $x$ and $y$ satisfy $y x = q x y$. It follows from
  the (noncommutative) $q$-binomial Theorem~\ref{thm:binomialtheorem:q} that,
  for nonnegative integers $m$,
  \begin{equation*}
    (x + y)^m \equiv x^m + y^m \quad (\operatorname{mod} \Phi_m (q)) .
  \end{equation*}
  As in the proof of Theorem~\ref{thm:bin:lucas} (and in the analogous sense),
  we conclude that
  \begin{equation}
    (x + y)^{n m} \equiv (x^m + y^m)^n \quad (\operatorname{mod} \Phi_m (q))
    \label{eq:freshman:n:q}
  \end{equation}
  for any integer $n$.
  
  With the notation from Section~\ref{sec:binomialtheorem}, we observe that,
  by \eqref{eq:freshman:n:q},
  \begin{equation*}
    \{ x^k y^{n - k} \} (x + y)^n \equiv \{ x^k y^{n - k} \} (x + y)^{n_0}
     (x^m + y^m)^{n'} \quad (\operatorname{mod} \Phi_m (q)) .
  \end{equation*}
  Since $n_0 \in \{ 0, 1, \ldots, p - 1 \}$, the right-hand side equals
  \begin{equation*}
    q^{(n_0 - k_0) k' m} (\{ x^{k_0} y^{n_0 - k_0} \} (x + y)^{n_0}) (\{
     x^{k' m} y^{(n' - k') m} \} (x^m + y^m)^{n'}) .
  \end{equation*}
  As $q^m \equiv 1$ modulo $\Phi_m (q)$, we conclude that $\{ x^k y^{n - k} \}
  (x + y)^n$ is congruent to
  \begin{equation*}
    (\{ x^{k_0} y^{n_0 - k_0} \} (x + y)^{n_0}) (\{ x^{k' m} y^{(n' - k') m}
     \} (x^m + y^m)^{n'})
  \end{equation*}
  modulo $\Phi_m (q)$. Observe that the variables $X = x^m$ and $Y = y^m$
  satisfy the commutation relation $Y X = q^{m^2} X Y$. Hence, applying
  Theorem~\ref{thm:binomialtheorem:loeb:q} to each term, we conclude that
  \begin{equation*}
    \binom{n}{k}_q \equiv \binom{n_0}{k_0}_q \binom{n'}{k'}_{q^{m^2}} \quad
     (\operatorname{mod} \Phi_m (q)) .
  \end{equation*}
  Since $q^{m^2} \equiv 1$ modulo $\Phi_m (q)$, the claim follows.
\end{proof}

\section{Conclusion}

We believe (and hope that the results of this paper provide some evidence to
that effect) that the binomial and $q$-binomial coefficients with negative
entries are natural and beautiful objects. On the other hand, let us indicate
an application, taken from \cite{s-apery}, of binomial coefficients with
negative entries.

\begin{example}
  A crucial ingredient in Ap\'ery's proof \cite{apery} of the
  irrationality of $\zeta (3)$ is played by the {\emph{Ap\'ery numbers}}
  \begin{equation}
    A (n) = \sum_{k = 0}^n \binom{n}{k}^2 \binom{n + k}{k}^2 .
    \label{eq:apery}
  \end{equation}
  These numbers have many interesting properties. For instance, they satisfy
  remarkably strong congruences, including
  \begin{equation}
    A (p^r m - 1) \equiv A (p^{r - 1} m - 1) \quad (\operatorname{mod} p^{3 r}),
    \label{eq:apery-sc1}
  \end{equation}
  established by Beukers \cite{beukers-apery85}, and
  \begin{equation}
    A (p^r m) \equiv A (p^{r - 1} m) \quad (\operatorname{mod} p^{3 r}),
    \label{eq:apery-sc}
  \end{equation}
  proved by Coster \cite{coster-sc}. Both congruences hold for all primes $p
  \geq 5$ and positive integers $m, r$. The definition of the Ap\'ery
  numbers $A (n)$ can be extended to all integers $n$ by setting
  \begin{equation}
    A (n) = \sum_{k \in \mathbb{Z}} \binom{n}{k}^2 \binom{n + k}{k}^2,
    \label{eq:apery:neg}
  \end{equation}
  where the binomial coefficients are now allowed to have negative entries.
  Applying the reflection rule \eqref{eq:bin:neg} to \eqref{eq:apery:neg}, we
  obtain
  \begin{equation}
    A (- n) = A (n - 1) . \label{eq:A:neg}
  \end{equation}
  In particular, we find that the congruence \eqref{eq:apery-sc1} is
  equivalent to \eqref{eq:apery-sc} with $m$ replaced with $- m$. By working
  with binomial coefficients with negative entries, the second author gave a
  uniform proof of both sets of congruences in \cite{s-apery}. In addition,
  the symmetry \eqref{eq:A:neg}, which becomes visible when allowing negative
  indices, explains why other Ap\'ery-like numbers satisfy
  \eqref{eq:apery-sc} but not \eqref{eq:apery-sc1}.
\end{example}

We illustrated that the Gaussian binomial coefficients can be usefully
extended to the case of negative arguments. More general binomial
coefficients, formed from an arbitrary sequence of integers, are considered,
for instance, in \cite{kw-xbin} and it is shown by Hu and Sun
\cite{hs-lucas} that Lucas' theorem can be generalized to these. It would be
interesting to investigate the extent to which these coefficients and their
properties can be extended to the case of negative arguments. Similarly, an
elliptic analog of the binomial coefficients has recently been introduced by
Schlosser \cite{schlosser-bin}, who further obtains a general noncommutative
binomial theorem of which Theorem~\ref{thm:binomialtheorem:q} is a special
case. It is natural to wonder whether these binomial coefficients have a
natural extension to negative arguments as well.

In the last section, we showed that the generalized $q$-binomial coefficients
satisfy Lucas congruences in a uniform fashion. It would be of interest to
determine whether other well-known congruences for the $q$-binomial
coefficients, such as those considered in \cite{andrews-qcong99} or
\cite{straub-qljunggren}, have similarly uniform extensions.

\begin{acknowledgements}
Part of this work was completed while the first author was supported by a
Summer Undergraduate Research Fellowship (SURF) through the Office of
Undergraduate Research (OUR) at the University of South Alabama.
We are grateful to Wadim Zudilin for helpful comments on an earlier draft of
this paper.
\end{acknowledgements}

\end{document}